\documentclass{article}
\usepackage[letterpaper,top=2cm,bottom=2cm,left=3cm,right=3cm,marginparwidth=1.75cm]{geometry}

\usepackage{amsthm, amsmath, amssymb, amsfonts}
\usepackage[utf8]{inputenc}
\usepackage[english]{babel}
\usepackage{url}
\usepackage{mathtools}

\usepackage{tikz}
\usepackage{tikz-3dplot}
\tdplotsetmaincoords{70}{110}
\usepackage{pgfplots}
\usepackage{pgfplotstable}
\usepackage{booktabs}
\usepackage{algorithm}
\usepackage{algpseudocode}
\usepackage{caption}
\usepackage{subcaption}
\usepackage{mathtools}
\mathtoolsset{showonlyrefs}

\newcommand{\NNN}{\nonumber\\}

\numberwithin{figure}{section}
\numberwithin{table}{section}
\numberwithin{equation}{section}

\newenvironment{abstr}[1]{ \vspace{.05in}\footnotesize
	\parindent .2in
	{\upshape\bfseries #1. }\ignorespaces}{\par\vspace{.1in}}
\newenvironment{Abstract}{\begin{abstr}{Abstract}}{\end{abstr}}
\newenvironment{keywords}{\begin{abstr}{Key words}}{\end{abstr}}
\newenvironment{AMS}{\begin{abstr}{AMS subject classifications}}{\end{abstr}}

\newtheorem{theorem}{Theorem}[section]
\newtheorem{lemma}[theorem]{Lemma}
\newtheorem{corollary}[theorem]{Corollary}
\newtheorem{proposition}[theorem]{Proposition}

\theoremstyle{definition}

\newtheorem{remark}[theorem]{Remark}

\allowdisplaybreaks[4]

\allowdisplaybreaks[4]

\begin{document}
	
	\title{Subspace decomposition with defect diffusion coefficient%
	}
	\author{Dilini Kolombage\footnotemark[3]\and Axel M\aa lqvist\footnotemark[2] \and Barbara Verf\"urth\footnotemark[3]}
	\date{}
	\maketitle
	
	\renewcommand{\thefootnote}{\fnsymbol{footnote}}
	\footnotetext[3]{Institute for Numerical Simulation, University of Bonn, Friedrich-Hirzebruch-Allee 7, 53115 Bonn, Germany.}
    \footnotetext[2]{Department of mathematical sciences, Chalmers University of Technology and University of Gothenburg, Chalmers Tvärgata 3, 41296 Göteborg, Sweden.}
	\renewcommand{\thefootnote}{\arabic{footnote}}
	
	\begin{Abstract} 
Elliptic diffusion problems with multiscale heterogeneous coefficients lead to poorly conditioned discrete systems and therefore require effective preconditioning strategies. While subspace decomposition preconditioners perform well for fixed realizations of the coefficient, their repeated construction becomes prohibitively expensive in uncertainty quantification settings, particularly in Monte-Carlo simulations, where a large number of fine-scale realizations must be treated. In this paper, we propose an offline-online approximation of a subspace decomposition preconditioner that exploits the localized structure of the random defects.  The preconditioner is constructed from local subspace solves that are precomputed offline for a small set of reference configurations and efficiently combined online for arbitrary realizations. We analyze the spectral properties of the resulting offline-online approximation operator and confirm its robustness and efficiency through numerical experiments. 
	\end{Abstract}
	
	\begin{keywords}
		preconditioner; subspace decomposition; perturbed problem; offline-online strategy; domain decomposition
	\end{keywords}
	
	\begin{AMS}
			 65N55, 65N30, 65N12, 35J15, 35B20, 35R60
	\end{AMS}

\section{Introduction}
\label{sec:1}
Elliptic diffusion problems with heterogeneous coefficients arise in a wide range of applications including composite materials, porous media and other environmental modeling. Due to the strong heterogeneity and possible high contrast of these coefficients, often characterized by multiple scales, the corresponding discrete problems typically lead to large, ill-conditioned linear systems, for which iterative solvers require suitable preconditioning to achieve robust convergence. Domain decomposition and subspace correction methods are standard tools in this regard, as they provide effective preconditioners based on a global coarse correction combined with localized subdomain solves; see, for example, the foundational subspace correction framework of Xu in \cite{MR1193013}, and the classical domain decomposition literature \cite{MR2104179, MR1857663}. In this study, we are primarily interested in the two-level additive Schwarz framework consisting of a geometric coarse space and overlapping localized subdomain corrections adapted for the multiscale setup; see \cite{MR3536998}. For a fixed coefficient realization, such realization-dependent preconditioning strategies---and additive Schwarz methods in particular---are well understood and can be applied effectively in combination with standard finite element discretizations. 

The computational situation changes drastically when many realizations of the diffusion coefficients must be treated, as in Monte Carlo (MC) sampling and uncertainty quantification. In this setting, each realization gives rise to a different discrete operator and, in general, to a different realization-dependent preconditioner. Although the action of an already constructed additive Schwarz preconditioner can be evaluated efficiently through independent coarse and patch-local solves, repeatedly preparing the local solvers for every realization may become prohibitively expensive, since the setup of domain decomposition methods involves many fine-scale local factorizations, inversions, or highly accurate inner iterative solves. This bottleneck motivates amortized strategies, in which expensive local computations are performed offline and then reused across many solves. 

Existing work addresses this challenge through different reuse mechanisms. Closest in spirit to the present work are from the class of sampling based methods that combine domain decomposition with a two-phase split. In this direction, \cite{MR3828864} constructs local polynomial-chaos (PC) approximations of subdomain contributions to build realization-dependent Schur-complement operators, while in \cite{MR4247004}, the authors utilize local surrogates of subdomain operators to construct realization-dependent preconditioners. Related ideas have also been developed for stochastic BDDC methods, where local BDDC components are approximated offline using subdomain-local random parametrizations and evaluated online for each realization \cite{tu2025stochastic}. These methods substantially reduce the realization-dependent setup cost while retaining the local and coarse structure of the underlying domain decomposition method. Their offline (phase I) approximations are, however, typically constructed as functions of continuous local random variables obtained from Karhunen--Lo\`eve (KL) or PC representations. In contrast, the proposed method here exploits a finite collection of recurring local coefficient configurations generated by spatially localized defects. 

Several further strategies have been proposed for accelerating sequences of parameter-dependent or sampled linear systems. In \cite{venkovic2024preconditioners}, the authors introduce a finite library of offline preconditioners by quantizing the random coefficients via a truncated KL representation and Voronoi cells, and use the nearest centroid coefficient to choose which preconditioner to apply. Nearby preconditioning methods introduced in \cite[Chapter 4]{MRPhD} solve the stochastic Helmholtz equation and reuse a preconditioner constructed for one realization to accelerate the solve for another realization whenever the coefficients are sufficiently close in an appropriate norm. Other approaches interpolate sampled inverse operators \cite{MR3484401} or update a reference preconditioner through sparse approximate maps between successive system matrices \cite{MR4273698}. Reduced-order preconditioners combine a conventional fine-level preconditioner with parameter-dependent reduced coarse corrections constructed from solution or error snapshots \cite{MR3782402}, while related methods interpolate these coarse spaces on the Grassmann manifold \cite{MR5029272}. Alternatively, Krylov-subspace recycling retains approximate invariant or search spaces across related systems \cite{MR2272183, MR2541285, MR2350023}, whereas low-rank tensor Krylov methods compress and solve an entire parameter-dependent family simultaneously \cite{MR2854614}. Complementary sampling strategies reduce Monte Carlo cost through variance-reduction and defect-based control-variate techniques for stochastic homogenization \cite{MR3479886,MR3341127}. Although these approaches do not specifically aim at the Monte Carlo coefficient-realizations, they address the many-query difficulty, but predominantly reuse or approximate global preconditioners, global reduced spaces, or global Krylov information. The present method instead constructs each realization-dependent preconditioner from reusable patch-local inverse actions determined directly by the local defect geometry. 

A separate line of work concerns functional stochastic methods including intrusive stochastic Galerkin and spectral stochastic finite element discretizations. These formulations produce one large coupled system involving both spatial and stochastic degrees of freedom and have motivated mean-based and block preconditioners \cite{MR2491431, MR2486837}, Kronecker-structured and truncation preconditioners \cite{MR2740635, MR2639600, MR4331976}, and stochastic domain decomposition methods \cite{MR3129536, MR3242991}. Although these methods also reuse deterministic operator information, their algebraic setting differs from the sequence of independent realization-wise systems considered in this work.

In contrast to these existing approaches where randomness is typically represented through global random fields or low-dimensional parameterizations, this study is motivated by composite materials with a periodic (or engineered) microstructure where manufacturing damage or impurities introduce rare, highly localized defects. In such materials, most periodic cells coincide with a fixed background configuration, while only a small subset contains defects at weakly randomized locations (see Figure \ref{coef_patterns} for a two-dimensional illustration). These weakly random defect models are widely studied in stochastic homogenization and related multiscale numerical methods  \cite{MR2864078, MR3479886, MR3341127, MR4941773, MR4378546, MR2818410}. In particular, we consider coefficients of the form 
\begin{equation}
\label{coeff:1}
A(x,\gamma) =A_\varepsilon(x) + b_{p,\varepsilon}(x,\gamma)\, B_\varepsilon(x)
\end{equation}
where $A_\varepsilon$ denotes the periodic background coefficient, $B_\varepsilon$  the localized defect contribution, and $b_{p,\varepsilon}$ is a random indicator function encoding the presence or absence of defects on the periodic cells with a given probability $p$. The precise mathematical definition is deferred to Section \ref{sec:2}. This local defect structure has an important consequence for the multiscale discretization. While the fine mesh resolves individual defects, the coarse mesh does not, so that interior coarse scale patches differ only through the local defect patterns they contain. Consequently, the number of distinct local defect configurations that may occur on an interior patch remains finite and independent of the total number of Monte Carlo realizations. This finite set of admissible configurations forms the key structural property exploited in the present work.
\begin{figure}[h]
\centering
\includegraphics[width=0.5\textwidth]{ 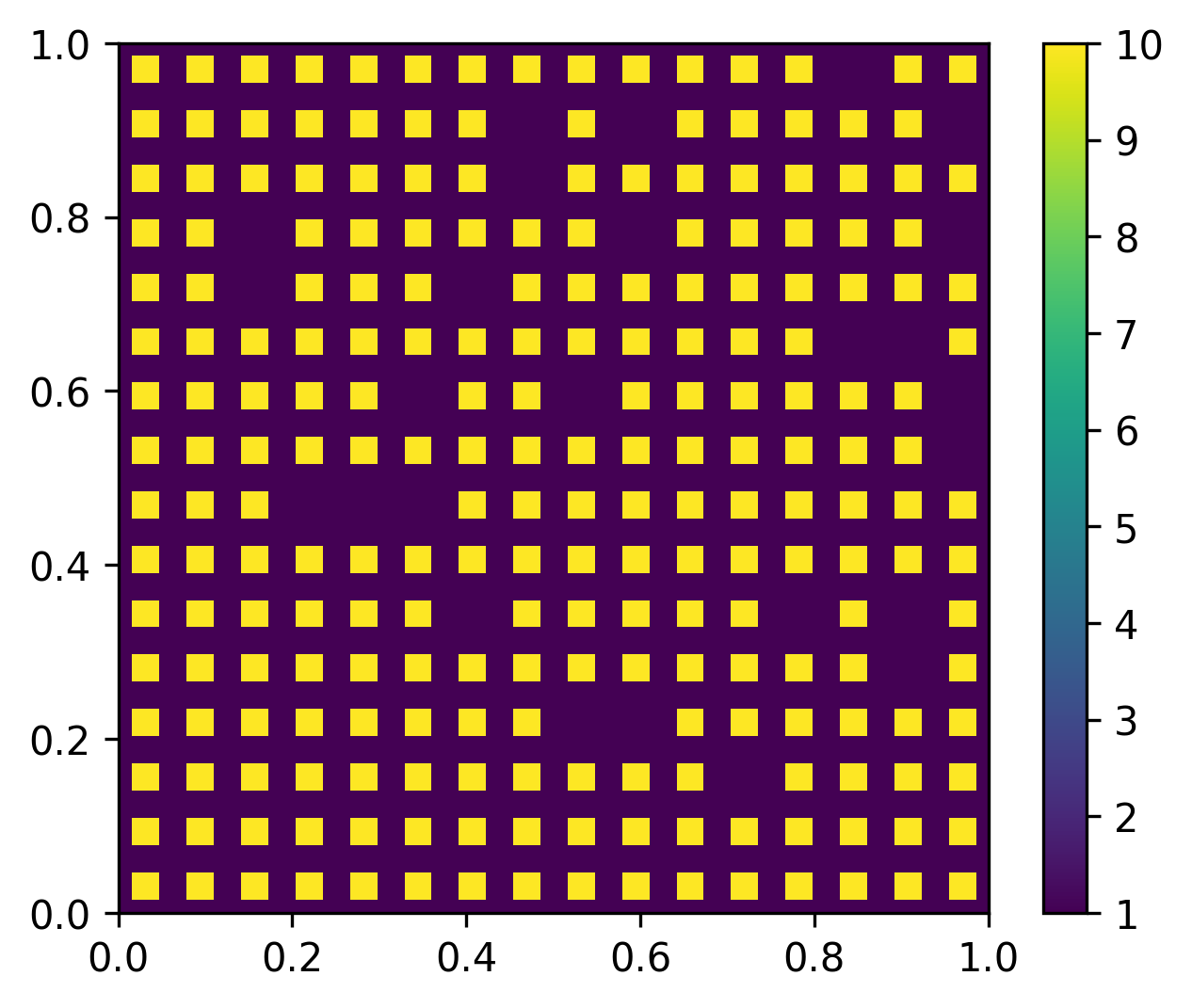}
\caption[0.6\textwidth]{The random erasure model in two dimensions with $\alpha=1 $, $\beta=10, p=0.1,\, \varepsilon=2^{-4}$ and $h=2^{-9} $.}
\label{coef_patterns}
\end{figure}
Building on this observation, we propose a two-level additive Schwarz offline-online preconditioner tailored to defect-type coefficients. In the offline stage, we compute and store patch-local factorizations for these single-defect reference coefficients including the defect-free background. In the online stage, for each realization and each patch, the local preconditioner contributions are assembled purely algebraically as combinations and permutations of these precomputed operators, without additional patch local solves. The coarse component is treated analogously through an element-wise offline-online assembly but solved exactly.

The main contribution of this work is to incorporate local defects into a rigorous and efficient approximation of the two-level additive Schwarz preconditioner. We show that the proposed offline-online construction yields an operator $\widetilde{\mathcal{B}}(A)$ that acts as a perturbation of the additive Schwarz preconditioner $\mathcal{B}(A)$, and derive bounds relating the spectral behavior of $\widetilde{\mathcal{B}}(A)$ and the convergence of the preconditioned conjugate gradient (PCG) method to the offline-online approximation error. Numerically, we demonstrate that $\widetilde{\mathcal{B}}(A)$ performs close to $\mathcal{B}(A)$ across varying defect probabilities and contrasts, while substantially reducing the setup cost in comparison to $\mathcal{B}(A)$, also outperforming the reuse of a single defect-free preconditioner. To assess the performance of our preconditioner across different complexity levels, we additionally test it on several defect geometries.

The remainder of this paper is organized as follows. In Section \ref{sec:2}, we introduce the model problem and the subspace decomposition framework that forms the basis of our offline-online method considered in this work. Section~\ref{sec:3} presents the proposed offline-online strategy for approximating the exact subspace decomposition preconditioner in detail. In Section \ref{sec:4}, we analyze the spectral properties of the preconditioned operator and the convergence results of the preconditioned conjugate gradient method. Implementation aspects and a detailed run-time complexity analysis are discussed in Section \ref{sec:5}. Numerical experiments illustrating the performance of the offline-online method in comparison to several base methods are presented in Section \ref{sec:6}. Finally, Section \ref{sec:7} concludes the paper with a summary and outlook. 

\section{Subspace decomposition framework}
\label{sec:2}
In this section, we introduce the model problem, its finite element discretization, and the subspace decomposition framework, following the classical subspace correction method by Xu \cite{MR1193013}; see also \cite{MR3536998,MR4654621}; which form the basis of the method proposed in this paper. The purpose of this section is to fix notation and recall standard concepts.

\subsection{Model problem}
\label{subsec:2.1}
In the present work, we consider the computational domain to be the (unit) hypercube $\Omega \subset \mathbb{R}^d$, $d \in \{1,2,3\}$ equipped with nested uniform Cartesian meshes. The proposed offline-online construction exploits this structured setting; the corresponding patch geometry is introduced in Section \ref{sec:2.2}. Extensions to more general domains are discussed in Remark \ref{rem:2.1}.
We define 
\begin{equation}
H^1(\Omega) := \{ v \in L^2(\Omega) : \nabla v \in [L^2(\Omega)]^d \},
\end{equation}
where $ V := H_0^1(\Omega)$ denotes the subspace of $H^1(\Omega)$ consisting of functions with vanishing trace on $\partial \Omega$. We consider the diffusion problem: find $u \in V$ such that
\begin{equation}
\label{eq:model-problem}
a(u,v) := \int_\Omega A\nabla u \cdot \nabla v \, dx = \int_\Omega f v \, dx =: (f,v)\quad \forall v \in V
\end{equation}
where $f \in L^2(\Omega)$ and the diffusion coefficient $A \in L^\infty(\Omega)$ satisfies the uniform ellipticity bounds $0 < \alpha \le A \le \beta$ for almost every $x \in \Omega$. The associated energy norm is given by $\|v\|_a := a(v,v)^{1/2}$. Throughout this work, we consider random diffusion coefficients $A(x,\gamma)$, where the above properties hold for each realization $\gamma$. The coefficient is taken to be
\begin{equation}
\label{coeff}
A(x,\gamma) =A_\varepsilon(x) + \sum_{j \in I \subset \mathbb{Z}^d} \chi_{\varepsilon(j+\mathcal Q)}(x)\, \hat b_p^j(\gamma)\, B_\varepsilon(x).
\end{equation}
Here, $A_\varepsilon$ denotes the deterministic periodic background coefficient and $B_\varepsilon$ the deterministic local defect coefficient, defined by  \begin{equation}
A_\varepsilon(x) =A_\mathrm{per}(x/\varepsilon)\,, \quad B_\varepsilon(x) =B_\mathrm{per}(x/\varepsilon)
\end{equation}
where $A_\mathrm{per}$ and $B_\mathrm{per}$ are $1$-periodic functions satisfying
\begin{equation}
 0<\alpha \leq \mathrm{ess inf}\, A_\mathrm{per} \leq \mathrm{ess sup}\, A_\mathrm{per} \leq \beta < \infty 
 \end{equation}
 and
 \begin{equation}
0<\alpha \leq \mathrm{ess inf}\, (A_\mathrm{per}+B_\mathrm{per}) \leq \mathrm{ess sup}\, (A_\mathrm{per}+B_\mathrm{per}) \leq \beta < \infty.
\end{equation}
The set $\mathcal Q\subset[0,1]^d$ denotes the reference defect cell and specifies the support of a single defect within one periodic cell, thereby determining the local defect geometry. Furthermore, $\chi_{\varepsilon(j+\mathcal Q)}$ denotes the characteristic function of the scaled and translated copy $\varepsilon(j+\mathcal Q)$ where 
\begin{equation}
    I=\{j\in\mathbb Z^d:\varepsilon(j+\mathcal Q)\subset\Omega\}.
\end{equation}
The random variables $\{\hat b_p^j\}_{j\in I}$ are independent Bernoulli random variables with parameter $p$, i.e., $\hat b_p^j=0$ with probability $1-p$ and $\hat b_p^j=1$ with $p$. 
We illustrate the choice of parameters in this model, considering, for example, the random erasure coefficient in Figure \ref{coef_patterns}. Here, we choose $\mathcal Q = [0.25,0.75]^d$ with
\begin{equation}
    A_{\mathrm{per}}(y)=\begin{cases}
\beta, & y\in\mathcal Q\\
\alpha, & \text{otherwise}
\end{cases}\,\, \quad \text{ and } \quad
\qquad
B_{\mathrm{per}}(y)=
\begin{cases}
\alpha-\beta, & y\in\mathcal Q\\
0, & \text{otherwise}.
\end{cases}\,
\end{equation}
Therefore, $A_\varepsilon$ represents the periodic background material, while $B_\varepsilon$ encodes the local modification associated with an activated defect. Consequently, whenever $\hat b_p^j=1$, the inclusion in the cell $\varepsilon(j+\mathcal Q)$ is replaced by the background value $\alpha$, thereby creating a local defect.

We discretize $\Omega$ using a fine Cartesian mesh $\mathcal{T}_h$ that resolves all microscale features of the coefficient $A$ and define the conforming finite element space $V_h \subset V$ consisting of continuous, piece-wise bilinear functions, i.e., $Q_1$ elements on the mesh. The corresponding discrete problem reads: find $u_h \in V_h$ such that
\begin{equation}
a(u_h,v) = (f,v) \quad \forall v \in V_h.
\end{equation}
In addition, we introduce a nested coarse mesh $\mathcal{T}_H$ with mesh size $H > h$, and denote by $V_H \subset V_h$ the associated coarse finite element space. Throughout the paper, we assume that $\mathcal{T}_h$ is a refinement of $\mathcal{T}_H$.

\subsection{Patch structure and subspaces}
\label{sec:2.2}
In this section we introduce the patch geometry and the associated overlapping subspaces used in this work. For simplicity, all illustrations are shown in two dimensions, although the construction is valid for $d \leq 3$. Following the classical subspace decomposition framework, the fine scale finite element space $V_h$ is decomposed into overlapping local subspaces associated with vertex-centered patches of the coarse mesh. These local spaces form the building blocks of the additive Schwarz preconditioner introduced in Section \ref{subsec:2.3}.

To this end, let $\{\varphi_i\}_{i \in \mathcal{N}}$ denote the nodal basis of the coarse space $V_H$, where $\mathcal{N}$ is the index set of coarse interior vertices. For each $i \in \mathcal{N}$, we define the corresponding subdomain, (hereafter also referred to as a patch)
\begin{equation} \omega_i := \operatorname{supp}(\varphi_i), 
\end{equation}
that is, the union of all coarse elements sharing the vertex associated with $\varphi_i$. Using these patches, we define a family of local subspaces
\begin{equation} V_i := V_h \cap H^1_0(\omega_i), \quad i \in \mathcal{N} 
\end{equation}
and set $V_0 := V_H$. The collection $\{V_i\}_{i=1}^{|\mathcal{N}|}$ forms the overlapping decomposition of $V_h$ underlying the additive Schwarz method. Further observe that the patches $\omega_i$ satisfy the uniformly bounded overlap property; that is, almost every point in $\Omega$ is contained in at most $2^d$ patches independently of $h$ and $H$. Figure \ref{fig:patches} illustrates the resulting patch geometry on a uniform Cartesian mesh.
 	\begin{figure}[H]
 		\centering
 		\begin{tikzpicture}[scale=1/19*4]
 			\def\Size{26}
 			\def\step{0.1}
 			\def\scalebasis{0.478}
 			\def\opabasis{1}
 			\pgfmathtruncatemacro\size{(\Size - 1) / 2 - 1}
 			\foreach \i in {0,...,\size}{
 				\foreach \j in {0,...,\size}{
 					\fill[black] (2*\i+1, 2*\j+1) rectangle ++(1, 1);
 				}
 			} 
 			\fill[white, opacity=0.8, draw=black] (0.5, 0.5) rectangle ++(\Size  - 2, \Size - 2);
 			\draw[step=4cm, gray, shift={(0.5cm, 0.5cm)}] (0.02, 0.02) grid (\Size - 2.02,\Size - 2.02);
 			\draw[thick] (0.5, 0.5) rectangle ++(\Size  - 2, \Size - 2);
 			 			
 			\draw[red, very thick] (4.5,12.5) rectangle ++(8,8);
 			\node[red, above] at (8.5,20.5) {$\omega_i$}; 
 			\draw[purple, very thick] (8.5,4.5) rectangle ++(8,8);
 			\node[purple, below] at (12.5,4.5) {$\omega_j$};
 			\node[gray, above] at (5, \Size - 1.5) {$\mathcal{T}_H$};
 			\node[above] at (\Size - 2.5, \Size - 1.5) {$\Omega$};
 		\end{tikzpicture}
 		\caption{Possible pattern of a coefficient $A_\varepsilon$ and the patches $\omega_i$.}
 		\label{fig:patches}
 	\end{figure}
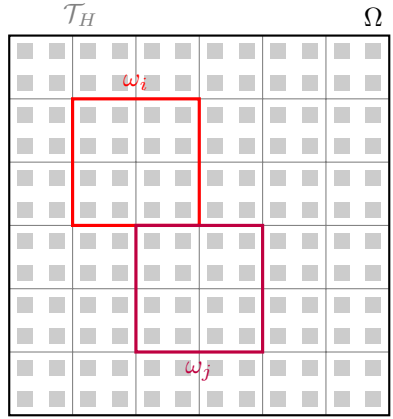
Because the coarse mesh is uniform and Cartesian, all interior patches are identical up to translation. This translation invariance plays a central role in the offline-online construction developed in Section \ref{sec:3}, where local operators computed on a single reference patch are reused throughout the computational domain.

\begin{remark}
\label{rem:2.1}
For more general Lipschitz domains than unit hypercubes, the patches as defined above may no longer be identical up to translation (depending on the mesh structure). Consequently, the translation-invariant property exploited in the offline-online strategy may be weakened. However, this can be addressed by introducing a few additional reference configurations or treating the affected patches separately, still retaining the overall efficiency of the offline-online approach if the domain is not too complicated.
\end{remark}

\subsection{Local projections and additive preconditioner}
\label{subsec:2.3}
We now introduce the classical two-level additive Schwarz preconditioner following the subspace correction framework introduced in \cite{MR1193013}. From this point on, all operators and spaces are understood on the discrete finite element spaces $V_h$ and $V_H$, introduced in Section \ref{subsec:2.1}. The purpose of this section is to establish the exact preconditioner that will subsequently be approximated by the proposed offline-online construction.
Recall that $(\cdot,\cdot)$ denotes the $L^2(\Omega)$ inner product. We let $V_h'$ be the dual space of $V_h$ equipped with the duality pairing $\langle \cdot,\cdot \rangle : V_h' \times V_h \to \mathbb{R} $. Then every $f \in L^2(\Omega)$ is identified with an element of $V_h'$ through
\begin{equation}
\langle f, v \rangle := (f,v) \quad \forall v \in V_h.
\end{equation}
The discrete diffusion operator $\mathcal{L}(A): V_h \to V'_h$ is defined by
\begin{equation}
\langle \mathcal L(A) v, w \rangle := a(v,w) \quad \forall v,w \in V_h.
\end{equation}
For each subspace $V_i$ for $i =1,\dots,|\mathcal{N}| $, we define the corresponding local solution operator $ \mathcal B_i(A) : V_h' \to V_i$ by
\begin{equation}
\label{eq:local-inverse}
a(\mathcal B_i(A) g, w) = \langle g, w \rangle \quad \forall w \in V_i
\end{equation}
for a given $g \in V'_h$. Our aim is to apply $\mathcal B_i(A)$ to the functional of the form $g=\mathcal L(A) v$ yielding the associated local Ritz projection $\mathcal P_i(A) : V_h \to V_i$ defined by
\begin{equation}
    \label{eq:Pexact}
\mathcal P_i(A) = \mathcal B_i(A) \circ \mathcal L(A),
\end{equation} which is equivalent to the variational characterization
\begin{equation}
\label{eq:ritz-projection}
a(\mathcal P_i(A) v, w) = a(v,w) \quad \forall w \in V_i.
\end{equation}
The exact two-level additive Schwarz preconditioner is therefore given by 
\begin{equation}
\label{eq:B_prec}
    \mathcal B(A) := \sum_{i=0}^{|\mathcal{N}|} \mathcal B_i(A)
\end{equation}
where the corresponding preconditioned operator is 
\begin{equation}
\label{eq:P(A)}
    \mathcal P(A) := \sum_{i=0}^{|\mathcal{N}|}\mathcal P_i(A) = \mathcal B(A) \circ \mathcal L(A).
\end{equation}
Hereafter, whenever the dependence on the coefficient is clear from the context, we omit it for simplicity and write $ \mathcal L:=  \mathcal L(A),\, \mathcal B_i:=  \mathcal B_i(A),\, \mathcal B:=  \mathcal B(A)$ and $ \mathcal P :=\mathcal P(A)$. The index $i=0$ in \eqref{eq:P(A)} corresponds to the global coarse space contribution. The associated operator $\mathcal B_0: V_h' \to V_H$ is defined analogously to the local operators $\mathcal B_{i\geq1}$ by solving a variational problem on the coarse space $V_0 := V_H$. It represents the global, coarse-scale correction in the subspace decomposition, while the operators $\mathcal B_{i \ge 1}$, provide localized fine-scale corrections supported on the overlapping patches $\omega_i$.

For completeness and to facilitate comparison with the standard additive Schwarz literature, e.g., \cite{MR2373954}, we also state the equivalent algebraic formulation corresponding to \eqref{eq:B_prec}, which is used in the numerical implementation.  Recall that applying the local solution operator $\mathcal B_i$ amounts to solving a local finite element problem on the patch $\omega_i$.  We, therefore let $R_i$ denote the restriction operator from the global finite element space to the degrees of freedom associated with the local subspace $V_i$, and let $ K_i:=K_i(A)$ denote the corresponding local stiffness matrix on $\omega_i$. Then the matrix representation of the exact two-level additive Schwarz preconditioner can be written as
\begin{equation}
\label{eq:B_matrix}
B :=R^{\top}_0(K_0)^{-1}R_0+\sum\limits_{i=1}^{N}R^\top_i(K_i)^{-1}R_i.
\end{equation}
Throughout the remainder of the paper, we employ the operator formulation for the theoretical analysis, while the numerical implementation is based on the equivalent matrix formulation in \eqref{eq:B_matrix}. 

In practice, the discrete system is solved using the preconditioned conjugate gradient (PCG) method with $\mathcal B$ as a left preconditioner. The operator $\mathcal P = \mathcal B \circ \mathcal L$  is introduced primarily for the subsequent analysis, since its spectral properties determine the convergence behavior of the PCG iteration.  In the ideal case, the local operators $\mathcal B_i$ and the coarse operator $\mathcal B_0$ are computed with respect to the true coefficient $A(x, \gamma)$. However, in the many-realization setting, the repeated construction of these local operators becomes computationally expensive. We address this issue through the proposed offline-online strategy introduced in the next section.

\section{Offline-online approximation of the preconditioner}
\label{sec:3}
We now introduce an offline-online approximation to the exact additive Schwarz preconditioner introduced in Section \ref{sec:2}. The proposed approach retains the exact coarse correction while replacing the realization-dependent local operators by inexpensive approximations assembled from a finite collection of precomputed reference operators. This approach naturally separates into two stages: an offline phase performed once, and an online phase executed for each realization.

We assume that the coarse mesh $\mathcal{T}_H$ is aligned with the periodicity of $A_\varepsilon$ in the sense that the restriction $A_\varepsilon|_T$ is identical for all coarse elements $T \in \mathcal{T}_H$. Furthermore, we assume a scale separation of the form
\begin{equation}
h \le \varepsilon < H,
\end{equation}
where $\varepsilon$ is not asymptotically small with respect to $H$. This regime is typical for defect problems, in which the microscale is resolved on the fine mesh but does not average out at the coarse-scale. We further recall from Section \ref{sec:2.2} that all patches share the same geometry up to translation. Combined with the localized nature of the defect coefficient, this implies that each local problem depends only on the restriction of the coefficient to its corresponding patch. This enables us to precompute the local quantities on a single reference patch as explained in the offline phase below.

\subsection{Offline phase}

The aim of the offline phase is to precompute, once and for all, a set of reference local operators that can subsequently be reused in the online phase to approximate the realization-dependent operators corresponding to arbitrary realizations of the diffusion coefficient.
Because all interior patches $\omega_i$ share the same geometry  (see Figure \ref{fig:patches}), the  restriction of the coefficient $A(\cdot,\gamma)$ to $\omega_i$ is fully determined by the local defect configuration within the patch. This allows local problems posed on different interior patches to be mapped, by translation, to a single reference patch. Furthermore, since the defect term in \eqref{coeff} is localized, the restriction $A(\cdot,\gamma)|_{\omega_i}$ can be represented as a sum of contributions associated with the individual defect locations contained in that patch. We therefore introduce a finite family of reference coefficients defined on a reference patch $\widehat{\omega}$. The first reference coefficient corresponds to the defect-free background,
\begin{equation}
A^{(0)} := A_\varepsilon|_{\widehat{\omega}}.
\end{equation}
For each admissible defect location intersecting the patch, we define its reference coefficient
\begin{equation}
\label{ref:coef}
A^{(\ell)} := A_\varepsilon|_{\widehat{\omega}} + \chi_{\varepsilon(j_\ell + Q)} B_\varepsilon, \quad \ell = 1,\dots,\texttt{N}_{\mathrm{ref}}    
\end{equation}
where $j_\ell$ enumerates the possible defect positions within $\widehat{\omega}$. In this way, the set $\{A^{(\ell)}\}_{\ell=0}^{\texttt{N}_{\mathrm{ref}}}$ exhaustively represents all admissible single-defect configurations on the reference patch. Because arbitrary local defect configurations are assembled online through linear combinations of these reference contributions (see Sections \ref{sec:3.2} and \ref{sec:5.2}), it is sufficient to construct and store operators only for $A^{(0)}$ and $A^{(\ell \ge 1)}$.

We now define the corresponding reference local solution operators (factorizations)  $\widehat {\mathcal{B}}^{(\ell)}:=  \widehat{\mathcal{B}}(A^{(\ell)})$  for $\ell = 0,\dots,\texttt{N}_{\mathrm{ref}}$ by solving a local variational problem on the reference patch $\widehat{\omega}$. Let $\widehat V \subset H_0^1(\widehat{\omega})$ denote the finite element space on $\widehat{\omega}$ . For a given functional $g \in \widehat{ V}'$, the reference local solution operator
\begin{equation}
\widehat {\mathcal{B}}^{(\ell)} : \widehat V' \to \widehat V
\end{equation}
is defined by
\begin{equation}
\label{eq:ref-local-operator}
\int_{\widehat{\omega}} A^{(\ell)} \nabla \bigl(\widehat {\mathcal{B}}^{(\ell)} g\bigr) \cdot \nabla w \, dx = \langle g, w \rangle \quad \forall w \in \widehat V.
\end{equation}
These operators are now stored for subsequent use in the online phase.  

\subsection{Online phase}
\label{sec:3.2}
Given a realization of $A(\cdot,\gamma)$, the aim of the online phase is to approximate the realization-dependent local operators using only the reference operators computed in the offline phase. Since the defect contributions are localized and additive, we can write
\begin{equation}
\label{eq:local}
A|_{\omega_i}=\sum_{\ell=0}^{\texttt{N}_{\mathrm{ref}}} \mu_\ell^{(i)}A^{(\ell)}
\end{equation}
using the reference coefficients from \eqref{ref:coef}. Here, the coefficients $\mu_\ell^{(i)} \in \{0,1\}$ for  $\ell \ge 1$ indicate whether a defect is present at location $\ell$ within the patch $\omega_i$. Since the defects are non-overlapping and enter additively, the background weight satisfies $\mu_0^{(i)} = 1 - \texttt{N}_{\mathrm{def}}^{(i)}$ where $\texttt{N}_{\mathrm{def}}^{(i)}$ denotes the number of defects intersecting $\omega_i$. In particular
\begin{equation}
\label{eq:mu}
\sum_{\ell=0}^{\texttt{N}_{\mathrm{ref}}} \mu_\ell^{(i)} = 1.
\end{equation}
Since the Bernoulli variables defining the defect locations are known for each realization, the coefficients $\mu^{(i)}_\ell$ can be determined directly from the local configurations by identifying which admissible defect locations are present within the patch $\omega_i$. This requires only a simple local inspection of the patch and incurs no additional computational cost. Further details regarding the determination and implementation of $\mu_\ell^{(i)}$ can be found in \cite{MR4378546,MR4941773}. 

Using the local coefficient representation in \eqref{eq:local}, the exact realization-dependent local operator $\mathcal B_i$ is approximated by an algebraic combination of the precomputed reference local operators
\begin{equation}
\widetilde {\mathcal{B}}_i := \widetilde {\mathcal{B}}_i(A) = \sum_{\ell=0}^{\texttt{N}_{\mathrm{ref}}} \mu_\ell^{(i)}\, \mathcal B_i^{(\ell)}
\end{equation}
where $\mathcal B_i^{(\ell)} := \mathcal B_i(A^{(\ell)})$ is the translated copy of the reference operator $\widehat{\mathcal{B}}^{(\ell)}$ to the physical patch $\omega_i$ acting on the local space $V_i$. Consequently, no local boundary problems are solved during the online phase. For each reference coefficient $A^{(\ell)}$, the associated reference local projection operator is $\mathcal P_i^{(\ell)} := \mathcal B_i^{(\ell)} \circ \mathcal L$ yielding the approximate local projection operator
\begin{equation}
\widetilde{\mathcal P}_i := \widetilde{\mathcal B}_i \circ \mathcal{L}.
\end{equation}
Now, summing over all patches, we obtain the full offline-online preconditioner
\begin{equation}
\label{eq:B_oper}
\widetilde {\mathcal B} := \mathcal B_0 + \sum_{i \ge 1} \widetilde {\mathcal B}_i\,.
\end{equation}
The coarse operator $\mathcal B_0$ acts on the coarse space $V_H$ and is assembled and applied exactly. Since $\dim V_H \ll \dim V_h$, its computational cost is negligible compared to the patch-local solves.  An efficient implementation of  $\mathcal B_0$  is described in Section \ref{sec:5.1}.  The operator $\widetilde {\mathcal B}$ therefore preserves the structure of the exact two-level additive Schwarz preconditioner, while avoiding the repeated construction of realization-dependent local operators.

Analogously to \eqref{eq:B_matrix}, we also state the corresponding matrix realization of \eqref{eq:B_oper} by replacing each exact local inverse by its offline-online approximation. Let $\widehat{K}^{(\ell)}:=\widehat{K}(A^{(\ell)})$ denote the stiffness matrix assembled on the reference patch $\widehat{\omega}$ using the reference coefficient $A^{(\ell)}$. Then the corresponding matrix realization of the approximate offline-online preconditioner becomes
\begin{equation}
\label{eq:Btilde_matrix}
     R^{\top}_0K_0^{-1}R_0 + \sum\limits_{i=1}^{N}R^\top_i\Bigg(\sum\limits^{\texttt{N}_\mathrm{ref}}_{\ell=0}\mu^{(i)}_\ell \big(\widehat K^{(\ell)}\big)^{-1}\Bigg) R_i
\end{equation}
where $ \widetilde{B}_i: = R^\top_i\Big(\sum\limits^{\texttt{N}_\mathrm{ref}}_{\ell=0}\mu^{(i)}_\ell \big(\widehat K^{(\ell)}\big)^{-1}\Big) R_i$ is the matrix corresponding to the approximate local operator $\widetilde{\mathcal B}_i$.

\begin{remark}
    It is possible to generalize the construction of the offline-online preconditioner to also include neighboring pairs of defects in the offline phase. While this would slightly increase the cost of the offline phase it would decrease the number of iterations needed for higher values of p, where neighboring pairs of defects occur more frequently.
\end{remark}
In the next section we analyze the approximation of $\mathcal B$ by  $\widetilde {\mathcal B}$ by deriving perturbation estimates that quantify the approximation error and its effect on the spectrum of the preconditioned operator.

\section{Convergence analysis}
\label{sec:4}
In this section, we discuss the effect of the offline-online approximation on the convergence of the PCG method using classical results from additive Schwarz theory. All operators and spaces used in this section are understood as defined in Section \ref{sec:2}. In particular, we recall that $\mathcal L: V_h \to V'_h$ denotes the stiffness operator associated with the bilinear form $a(\cdot, \cdot)$, while $\mathcal B$ and $\widetilde {\mathcal B}$ denote the exact and offline-online additive Schwarz preconditioners, respectively. Throughout this section, we further fix a realization $\gamma$ of the diffusion coefficient $A(\cdot, \gamma)$; accordingly, all the associated bilinear forms and operators are understood to be associated with the same coefficient.

\subsection{Iteration by subspace decomposition}
We first recall the spectral properties of the exact subspace decomposition preconditioner together with the corresponding convergence results for the PCG method, cf.~\cite{MR3536998,MR4654621}. We further recall that
\[\mathcal B := \sum_{i=0}^{|\mathcal{N}|} \mathcal B_i, \quad \quad \mathcal P:= \mathcal B \circ \mathcal L\]
where $\mathcal B_0$ is the coarse correction, while $\mathcal B_{i\geq 1}$ are the exact local operators introduced in Section \ref{subsec:2.3}.

\begin{lemma}
\label{lem:4.1}
For each $i=0,\dots,|\mathcal N|$, the solution operator $\mathcal B_i$ satisfies the property
\begin{equation}
\label{prop:B}
\langle g_1, \mathcal B_ig_2 \rangle = \langle g_2, \mathcal B_ig_1 \rangle \quad \, g_1, g_2 \in V'_h.
\end{equation}
\end{lemma}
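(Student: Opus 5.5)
The plan is to exploit the defining variational identity \eqref{eq:local-inverse} twice, once with each functional, and then use the symmetry of the bilinear form $a(\cdot,\cdot)$. Fix $i\in\{0,\dots,|\mathcal N|\}$ and $g_1,g_2\in V_h'$, and set $u_1:=\mathcal B_i g_1\in V_i$ and $u_2:=\mathcal B_i g_2\in V_i$; for $i=0$ we read $V_0=V_H$, and the coarse operator is defined analogously. Well-posedness of $\mathcal B_i$ should be recorded first. Since $0<\alpha\le A\le\beta$ and $V_i\subset V_h\subset H^1_0(\Omega)$, the form $a$ restricted to $V_i\times V_i$ is a symmetric, coercive (via Poincaré/Friedrichs) and bounded bilinear form on a finite-dimensional space. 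By Lax--Milgram, \eqref{eq:local-inverse} therefore has a unique solution, so $u_1$ and $u_2$ are well defined.

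The key step is to test \eqref{eq:local-inverse} for $g_2$ with $w=u_1\in V_i$. This is admissible because $u_1$ lies in the test space $V_i$, and it gives $a(u_2,u_1)=\langle g_2,u_1\rangle=\langle g_2,\mathcal B_i g_1\rangle$. Likewise, testing \eqref{eq:local-inverse} for $g_1$ with $w=u_2$ gives $a(u_1,u_2)=\langle g_1,\mathcal B_i g_2\rangle$. Since $A$ is a scalar coefficient, $a(v,w)=\int_\Omega A\nabla v\cdot\nabla w\,dx$ is symmetric, so $a(u_1,u_2)=a(u_2,u_1)$, and \eqref{prop:B} follows immediately.

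There is no genuine obstacle here. The only points needing care are that the image of $\mathcal B_i$ lies in $V_i$, so it may be used as a test function, and that the duality pairing of $g\in V_h'$ with elements of $V_i\subset V_h$ is meaningful. One could add the remark that the same argument fails for a nonsymmetric coefficient. Equivalently, in matrix form this is the observation that $R_i^\top K_i^{-1}R_i$ is symmetric because $K_i$ is.
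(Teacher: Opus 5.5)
Your proof is correct and follows the paper's argument. Both test the defining identity \eqref{eq:local-inverse} with the other solution $\mathcal B_i g_j \in V_i$ and then invoke the symmetry of $a(\cdot,\cdot)$; your well-posedness remark simply makes explicit what the paper leaves implicit.
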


\begin{proof}
    By the definition of $\mathcal B_i$, we have $\langle g_1, \mathcal B_ig_2 \rangle = a(\mathcal B_ig_1, \mathcal B_ig_2)$. Since the bilinear form   $a(\cdot ,\cdot)$ is symmetric,  $a(\mathcal B_ig_1, \mathcal B_ig_2)=a(\mathcal B_ig_2, \mathcal B_i g_1)=\langle g_2, \mathcal B_ig_1\rangle$, where the last equality again follows from the definition of $\mathcal B_i$.
\end{proof}

The next lemma follows immediately from Lemma \ref{lem:4.1} together with the symmetry of the bilinear form $a(\cdot ,\cdot)$, and its proof is therefore omitted.

\begin{lemma}[Symmetry of the local projections]\label{lem:symmideal}
For each $i=0,\dots,|\mathcal N|$, the operator $\mathcal P_i = \mathcal B_i\circ \mathcal L$ is symmetric with respect to the energy inner product, i.e.,
\[
a(\mathcal P_iv,w) = a(v, \mathcal P_iw) \quad \forall v, w \in V_h.
\]    
\end{lemma}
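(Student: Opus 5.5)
The plan is to unfold the definition $\mathcal P_i = \mathcal B_i\circ\mathcal L$ and move the operator across the energy inner product using only the defining relation of $\mathcal L$, the variational characterization \eqref{eq:local-inverse} of $\mathcal B_i$, and Lemma~\ref{lem:4.1}. Fix $i\in\{0,\dots,|\mathcal N|\}$ and $v,w\in V_h$, and set $g_1:=\mathcal L v$, $g_2:=\mathcal L w\in V_h'$.

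The first step rewrites $a(\mathcal P_i v,w)$ as a duality pairing. By the definition of $\mathcal L$ together with the symmetry of $a(\cdot,\cdot)$,
\[
a(\mathcal P_i v, w) = a(w,\mathcal B_i g_1) = \langle \mathcal L w, \mathcal B_i g_1\rangle = \langle g_2, \mathcal B_i g_1\rangle .
\]
Next, Lemma~\ref{lem:4.1} exchanges the two functionals, giving $\langle g_2,\mathcal B_i g_1\rangle = \langle g_1,\mathcal B_i g_2\rangle$. The last step reverses the first one: $\langle g_1,\mathcal B_i g_2\rangle = \langle \mathcal L v,\mathcal B_i\mathcal L w\rangle = a(v,\mathcal P_i w)$. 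Chaining these equalities gives the claim.

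An equivalent route avoids Lemma~\ref{lem:4.1} altogether. Since $\mathcal P_i v\in V_i$, the Ritz characterization \eqref{eq:ritz-projection} with test function $\mathcal P_i w\in V_i$ gives $a(\mathcal P_i v,\mathcal P_i w)=a(v,\mathcal P_i w)$. By symmetry, the same identity yields $a(\mathcal P_i w,\mathcal P_i v)=a(w,\mathcal P_i v)$, and symmetry of $a$ then identifies both sides.

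For $i=0$ the identical argument applies with $V_0=V_H$. The only point requiring any care is checking that $\mathcal B_i g\in V_i\subset V_h$, so that $\mathcal B_i g$ is an admissible argument for $\mathcal L$ and for the test functions. This holds by construction, so I do not expect a genuine obstacle.
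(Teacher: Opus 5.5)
Your proof is correct and follows the route the paper indicates, where the proof is omitted as an immediate consequence of Lemma~\ref{lem:4.1} (applied to $g_1=\mathcal L v$, $g_2=\mathcal L w$) and the symmetry of $a(\cdot,\cdot)$. Your alternative argument via the Ritz characterization \eqref{eq:ritz-projection} is also valid, and it is the same alternative the paper notes right after the lemma.
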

\noindent Note that Lemma \ref{lem:symmideal} can alternatively be proved directly using the variational characterization \eqref{eq:ritz-projection} of $\mathcal P_i$.

Besides the symmetry, positive-definiteness of the operator $\mathcal P$ follows from the stable decomposition and bounded-overlap properties. We adapt these properties from \cite[Sections 3 and 4]{MR3536998} to our Cartesian $Q_1$ finite elements spaces and state the central result in Proposition \ref{prop: Lem3.1}. That is, any $v \in V_h$
admits the decomposition $v = \sum_{i=0}^{|\mathcal N|}v_i$ with $v_0 \in V_H$ and $v_{i\ge1} \in V_i$ satisfying 
\begin{equation}
\label{eq:decomposition}
      \sum_{i=0}^{|\mathcal N|}\|v_i\|_a^2    \leq c\frac{\beta}{\alpha}\|v\|_a^2 \qquad \text{and} \qquad   \|v\|_a^2      \leq (2^d +1) \sum_{i=0}^{|\mathcal N|}\|v_i\|_a^2 
\end{equation}
where the constant $c>0$ depends only on the spatial dimension and the shape regularity of the mesh. Here, $(\beta/\alpha)$ is the coefficient contrast and $(2^d +1)$ bounds the number of active subspace contributions coming from the at most $2^d$ patch contributions and the one coarse space contribution in Cartesian $Q_1$ decomposition.

\begin{proposition}[see Lemma 3.1 in \cite{MR3536998}]
\label{prop: Lem3.1}
    The operator $\mathcal P= \sum_{i=0}^{|\mathcal N|} \mathcal P_i$ is symmetric. Under the stable decompositions and bounded overlap estimates in \eqref{eq:decomposition}, there exist constants $C_1, C_2 > 0$, such that 
\begin{equation}
\label{eq:exact-spectral-bounds}
    C_1 \,\|v\|_{a}^2\, \le\, a( \mathcal Pv, v)\,\le \, C_2 \,\|v\|_{a}^2 \quad \forall v\in V_h
\end{equation}
where one may take $C_1= \alpha/(c\beta)$ and $C_2= 2^d+1$.
\end{proposition}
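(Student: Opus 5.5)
Symmetry of $\mathcal P$ is immediate from Lemma \ref{lem:symmideal}: since each $\mathcal P_i$ is $a$-symmetric and $a(\cdot,\cdot)$ is bilinear, the finite sum $\mathcal P=\sum_{i=0}^{|\mathcal N|}\mathcal P_i$ is $a$-symmetric as well. For the spectral bounds I will use two elementary facts about the Ritz projections in \eqref{eq:ritz-projection}. First, $a(\mathcal P_i v,v)=a(\mathcal P_iv,\mathcal P_iv)=\|\mathcal P_iv\|_a^2\ge 0$, obtained by testing \eqref{eq:ritz-projection} with $w=\mathcal P_iv\in V_i$. Second, $\|\mathcal P_iv\|_a$ is the norm of the best approximation of $v$ in $V_i$, so $\sup_{0\neq w\in V_i} a(v,w)/\|w\|_a=\|\mathcal P_iv\|_a$.

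\emph{Lower bound.} Fix $v\in V_h$ and take the stable decomposition $v=\sum_i v_i$ from \eqref{eq:decomposition}. Then
\[
\|v\|_a^2=\sum_i a(v,v_i)=\sum_i a(\mathcal P_iv,v_i)\le \sum_i \|\mathcal P_iv\|_a\|v_i\|_a \le \Big(\sum_i\|\mathcal P_iv\|_a^2\Big)^{1/2}\Big(\sum_i\|v_i\|_a^2\Big)^{1/2}.
\]
Using $\sum_i\|\mathcal P_iv\|_a^2=a(\mathcal Pv,v)$ together with the first inequality in \eqref{eq:decomposition}, this gives $\|v\|_a^2\le \big(a(\mathcal Pv,v)\big)^{1/2}\big(c\beta/\alpha\big)^{1/2}\|v\|_a$. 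Squaring yields $a(\mathcal Pv,v)\ge \alpha/(c\beta)\,\|v\|_a^2$, which is the claimed $C_1$.

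\emph{Upper bound.} Here I will apply the second inequality in \eqref{eq:decomposition} to the particular decomposition $w=\mathcal Pv=\sum_i \mathcal P_iv$, where each $\mathcal P_iv\in V_i$. This gives
\[
\|\mathcal Pv\|_a^2\le (2^d+1)\sum_i\|\mathcal P_iv\|_a^2=(2^d+1)\,a(\mathcal Pv,v)\le (2^d+1)\,\|\mathcal Pv\|_a\|v\|_a .
\]
Hence $\|\mathcal Pv\|_a\le(2^d+1)\|v\|_a$, and therefore $a(\mathcal Pv,v)\le\|\mathcal Pv\|_a\|v\|_a\le (2^d+1)\|v\|_a^2$.

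The main point requiring care is that the upper estimate in \eqref{eq:decomposition} must be used as a statement valid for \emph{every} splitting with $v_i\in V_i$, not just for the stable one. This is legitimate because it is a bounded-overlap (strengthened Cauchy--Schwarz) estimate that follows from the patch multiplicity $2^d$ plus the single coarse space. If one only trusts it for the stable decomposition, I would instead prove it directly for $\sum_i\mathcal P_iv$. Write $\|\sum_i w_i\|_a^2\le 2\|w_0\|_a^2+2\|\sum_{i\ge1}w_i\|_a^2$, and bound the local part elementwise: on each fine element at most $2^d$ of the $w_i$, $i\ge1$, are nonzero, so Cauchy--Schwarz pointwise gives $\|\sum_{i\ge 1}w_i\|_a^2\le 2^d\sum_{i\ge1}\|w_i\|_a^2$. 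The resulting constant may then differ slightly from $2^d+1$, but it still depends only on $d$.
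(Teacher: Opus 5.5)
Your proof is correct and follows the standard additive Schwarz argument behind the cited Lemma 3.1 (the paper itself gives no proof): the lower bound comes from the stable splitting plus Cauchy--Schwarz, and the upper bound from applying the bounded-overlap estimate to the splitting $\mathcal Pv=\sum_i\mathcal P_iv$. Your fallback need not lose the constant: at almost every point at most $2^d+1$ of the gradients $\nabla w_0,\nabla w_1,\dots$ are nonzero, so pointwise Cauchy--Schwarz gives $\|\sum_{i\ge0}w_i\|_a^2\le(2^d+1)\sum_{i\ge0}\|w_i\|_a^2$ for every splitting, which yields exactly $C_2=2^d+1$.
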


Consequently, the spectrum of $\mathcal P$ is uniformly bounded, i.e., the spectrum is contained in $[C_1,\; C_2]$, and its condition number is bounded by $C_2/C_1$. If $u_h^{(k)}$ denotes the $k$-th iterate of the preconditioned conjugate gradient method applied to $\mathcal Lu=f$ with a symmetric, positive-definite preconditioner $\mathcal B$, then the classical PCG theory yields the error estimate
\begin{equation}
\label{eq:P-cvg}
\|u_h-u_h^{(k)}\|_a \le
2\left(\frac{\sqrt{\kappa}-1}{\sqrt{\kappa}+1}\right)^k
\|u_h-u_h^{(0)}\|_a,
\end{equation}
where $\kappa$ denotes the condition number of $\mathcal P$; see e.g., \cite{MR2104179}. 

\subsection{Offline-online preconditioner and perturbation framework}
We now analyze the offline-online preconditioner $\widetilde {\mathcal B}$ introduced in Section \ref{sec:3} and its associated preconditioned operator $\widetilde {\mathcal P} := \widetilde {\mathcal B}\circ \mathcal L.$ We first verify that $\widetilde {\mathcal P}$ also inherits the symmetry of the exact preconditioned operator $\mathcal P$, and then establish the spectral perturbation bounds to analyze the convergence of the PCG method with $\widetilde{\mathcal B}$ as a left preconditioner.

\begin{proposition}
For each realization of the coefficient $A$, the operator
$\widetilde {\mathcal P}:V_h\to V_h$ is symmetric with respect to the energy inner product $a(\cdot,\cdot)$, that is,
\begin{equation}
a(\widetilde {\mathcal P}v,w)=a(v,\widetilde {\mathcal P}w)
\quad \forall v,w\in V_h.
\end{equation}
\end{proposition}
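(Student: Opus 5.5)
By linearity, $\widetilde{\mathcal P} = \mathcal P_0 + \sum_{i\ge1}\sum_{\ell=0}^{\texttt{N}_{\mathrm{ref}}} \mu_\ell^{(i)}\,\mathcal B_i^{(\ell)}\circ\mathcal L$. It therefore suffices to prove, for every $i$ and $\ell$, that the two-sided identity
\[
a(\mathcal B_i^{(\ell)}\mathcal L v, w) = \langle \mathcal L w, \mathcal B_i^{(\ell)}\mathcal L v\rangle
\]
is symmetric in $v$ and $w$. The coarse term $\mathcal P_0$ is already covered by Lemma \ref{lem:symmideal}. The real scalars $\mu_\ell^{(i)}$, including the possibly negative background weight $1-\texttt{N}_{\mathrm{def}}^{(i)}$, do not affect symmetry, since a real linear combination of $a$-symmetric operators is again $a$-symmetric.

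For a single term, I first rewrite the left-hand side using the symmetry of $a(\cdot,\cdot)$ and the definition of $\mathcal L$:
\[
a(\mathcal B_i^{(\ell)}\mathcal L v, w) = a(w,\mathcal B_i^{(\ell)}\mathcal L v) = \langle \mathcal L w, \mathcal B_i^{(\ell)}\mathcal L v\rangle .
\]
The key step is an analogue of Lemma \ref{lem:4.1} for the reference operators, namely
\[
\langle g_1,\mathcal B_i^{(\ell)} g_2\rangle = \langle g_2,\mathcal B_i^{(\ell)} g_1\rangle \qquad \text{for all } g_1,g_2\in V_h'.
\]
This cannot be quoted directly from Lemma \ref{lem:4.1}, because $\mathcal B_i^{(\ell)}$ is defined through the bilinear form $a^{(\ell)}_i(u,w):=\int_{\omega_i} A^{(\ell)}\nabla u\cdot\nabla w\,dx$ on $V_i$ (the translated version of \eqref{eq:ref-local-operator}), and not through $a$. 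The proof of Lemma \ref{lem:4.1}, however, only uses that the defining form is symmetric, and $a_i^{(\ell)}$ is symmetric because $A^{(\ell)}$ is a scalar coefficient. Repeating that argument gives
\[
\langle g_1,\mathcal B_i^{(\ell)}g_2\rangle = a_i^{(\ell)}(\mathcal B_i^{(\ell)}g_1,\mathcal B_i^{(\ell)}g_2) = \langle g_2,\mathcal B_i^{(\ell)}g_1\rangle ,
\]
where the pairing is understood through the restriction of $g\in V_h'$ to $V_i$. Equivalently, at the matrix level, $\widetilde B_i = R_i^\top\big(\sum_\ell\mu_\ell^{(i)}(\widehat K^{(\ell)})^{-1}\big)R_i$ is symmetric because each $\widehat K^{(\ell)}$ is symmetric positive definite.

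Taking $g_1=\mathcal L w$ and $g_2=\mathcal L v$ and reversing the first step then yields
\[
\langle \mathcal L w,\mathcal B_i^{(\ell)}\mathcal L v\rangle = \langle \mathcal L v,\mathcal B_i^{(\ell)}\mathcal L w\rangle = a(v,\mathcal B_i^{(\ell)}\mathcal L w),
\]
which is the required symmetry of $\mathcal B_i^{(\ell)}\circ\mathcal L$. Summing over $\ell$ and $i$ with the weights $\mu_\ell^{(i)}$, and adding $\mathcal P_0$, completes the argument.

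The only delicate point is the mismatch of bilinear forms. Unlike $\mathcal P_i$, the operator $\mathcal B_i^{(\ell)}\circ\mathcal L$ is not an $a$-orthogonal projection, so I cannot rely on the characterization \eqref{eq:ritz-projection}. Symmetry must instead come purely from the self-adjointness of $\mathcal B_i^{(\ell)}$ as a map $V_h'\to V_h$ with respect to the duality pairing, which is exactly what the Lemma \ref{lem:4.1}-type identity above provides. Positivity is not needed here; it would fail anyway because $\mu_0^{(i)}$ can be negative.
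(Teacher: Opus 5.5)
Your proof is correct and follows essentially the same route as the paper: you use the symmetry of $a(\cdot,\cdot)$ to move to the duality pairing, then the self-adjointness of each reference solution operator $\mathcal B_i^{(\ell)}$ with respect to $\langle\cdot,\cdot\rangle$, and pass this to the real linear combination (the paper does so for $\widetilde{\mathcal B}$ as a whole rather than term by term). Your remark that the argument of Lemma~\ref{lem:4.1} must be rerun with the form $a_i^{(\ell)}$ instead of $a$ makes explicit a step the paper only asserts.
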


\begin{proof}
In order to prove the symmetry of $\widetilde{\mathcal P}$, for arbitrary $v,w \in V_h$, we compute
\begin{equation}
a(\widetilde {\mathcal P}v,w) = a(\widetilde {\mathcal B}\mathcal Lv,w) =\langle \mathcal L\widetilde { \mathcal B}\mathcal Lv,w\rangle.
\end{equation}
Since the stiffness operator $\mathcal L$ is defined via a symmetric bilinear form, we have
\begin{equation}
\langle \mathcal L\widetilde{\mathcal B}\mathcal Lv,w\rangle = \langle \mathcal Lw,\widetilde {\mathcal B}\mathcal Lv\rangle.
\end{equation}
Now recall that each local solution operator $\mathcal B_i$ satisfies the property \eqref{prop:B}. Since every approximate local operator $\widetilde {\mathcal B}_{i\ge 1}$ is defined as a finite linear combination of such solution operators associated with the reference coefficients, it also satisfies \eqref{prop:B}. Therefore, summing over all patches and including the coarse operator $\mathcal B_0$, we conclude that the global approximation operator  $\widetilde{\mathcal{B}}$ satisfies \eqref{prop:B} as well.
\begin{equation}
\langle \mathcal Lw,\widetilde {\mathcal B}\mathcal Lv\rangle = \langle \mathcal Lv, \widetilde {\mathcal B}\mathcal Lw\rangle = a(v,\widetilde {\mathcal P}w).
\end{equation}
This proves that $\widetilde {\mathcal P}$ is symmetric with respect to the energy inner product $a(\cdot, \cdot)$.
\end{proof}
We next want to understand how much the offline-online construction deviates $\mathcal{P}$. To quantify the size of this deviation, we introduce the perturbation constant
\begin{equation}
\label{eq:eta-def}
    \eta := \sup_{v\in V_h \setminus \{0\} }\frac{|a( E(A)v,v) |}{\|v\|_{a}^2}.
\end{equation}
where $E(A) := \widetilde {\mathcal P} - \mathcal P$ is the perturbation between the two operators. In general, $\eta$ measures the size of the offline-online perturbation in the energy inner product, and $\eta=0$ implies that all local operators are assembled exactly. By the definition of $\eta$ we immediately have that
\begin{equation}
\label{prop:perturbation-bound}
|a( E(A)v, v) | \,\le \, \eta\, \|v\|_{a}^2.
\end{equation}

\begin{theorem}[Stability of the offline-online preconditioner.]
\label{thm:oo-stability}
Assume that the exact  preconditioned operator $\mathcal P$ satisfies the spectral bounds \eqref{eq:exact-spectral-bounds} and that the perturbation constant $\eta$ defined in \eqref{eq:eta-def} satisfies $\eta < C_1$. Then the approximate preconditioned operator $\widetilde {\mathcal P}$ satisfies
\begin{equation}
(C_1-\eta)\|v\|_a^2 \;\le\; a( \widetilde  {\mathcal P}v,v) \;\le\; (C_2+\eta)\|v\|_a^2 \quad \forall v\in V_h.
\end{equation}
In particular, the spectrum of $\widetilde {\mathcal P}$ is contained in the interval $[C_1-\eta,\, C_2+\eta]$. Consequently, the condition number of the preconditioned operator $\widetilde {\mathcal P}=\widetilde {\mathcal B} \circ \mathcal L$ satisfies
\begin{equation}
\kappa(\widetilde {\mathcal P}) \;\le\; \frac{C_2+\eta}{C_1-\eta}.
\end{equation}
\end{theorem}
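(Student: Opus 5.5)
The argument is a direct perturbation estimate. We write $\widetilde{\mathcal P} = \mathcal P + E(A)$ and combine the two-sided bound \eqref{eq:exact-spectral-bounds} from Proposition \ref{prop: Lem3.1} with the bound \eqref{prop:perturbation-bound} on the perturbation.

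\textbf{Step 1: quadratic form bounds.} For arbitrary $v\in V_h$ we have
\begin{equation}
a(\widetilde{\mathcal P}v,v) = a(\mathcal Pv,v) + a(E(A)v,v).
\end{equation}
Using $a(E(A)v,v)\ge -\eta\|v\|_a^2$ together with the lower bound $a(\mathcal Pv,v)\ge C_1\|v\|_a^2$ gives the lower estimate $(C_1-\eta)\|v\|_a^2$. Using $a(E(A)v,v)\le \eta\|v\|_a^2$ together with the upper bound $C_2\|v\|_a^2$ gives the upper estimate. The assumption $\eta<C_1$ ensures that the lower constant is strictly positive. Hence $\widetilde{\mathcal P}$ is positive definite in the energy inner product, and so $\widetilde{\mathcal B}$ is an admissible SPD preconditioner.

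\textbf{Step 2: from quadratic forms to spectrum.} By the preceding proposition, $\widetilde{\mathcal P}$ is symmetric with respect to $a(\cdot,\cdot)$ on the finite-dimensional space $V_h$. It therefore has a real spectrum and an $a$-orthonormal eigenbasis. For an eigenpair $\widetilde{\mathcal P}v=\lambda v$ we get $a(\widetilde{\mathcal P}v,v)=\lambda\|v\|_a^2$, and Step 1 then places $\lambda$ in $[C_1-\eta,\,C_2+\eta]$. Equivalently, the extreme eigenvalues are the minimum and maximum of the Rayleigh quotient $a(\widetilde{\mathcal P}v,v)/\|v\|_a^2$.

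\textbf{Step 3: condition number.} The energy-norm condition number $\kappa(\widetilde{\mathcal P})=\lambda_{\max}/\lambda_{\min}$ is then bounded by $(C_2+\eta)/(C_1-\eta)$.

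There is no real obstacle in this argument. The only delicate point is making sure the spectral statement is legitimate, which requires the $a$-symmetry of $\widetilde{\mathcal P}$; this was established in the preceding proposition. Positivity of the lower bound, and hence the validity of the condition number quotient, is exactly where the hypothesis $\eta<C_1$ enters.
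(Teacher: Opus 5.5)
Your proposal is correct and follows essentially the same route as the paper: you write $\widetilde{\mathcal P}=\mathcal P+E(A)$, split $a(\widetilde{\mathcal P}v,v)=a(\mathcal Pv,v)+a(E(A)v,v)$, combine the exact spectral bounds with $|a(E(A)v,v)|\le\eta\|v\|_a^2$, and then use the $a$-symmetry of $\widetilde{\mathcal P}$ together with $\eta<C_1$ to get the spectral inclusion and the condition-number bound. Your Step 2 only spells out the eigenpair/Rayleigh-quotient argument that the paper compresses into a single sentence.
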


\begin{proof}
By the definition of the perturbation operator $E(A)$, we have,
\begin{equation}
    \widetilde {\mathcal P} = \mathcal P + E(A).
\end{equation}
Therefore, for any $v\in V_h$, 
\begin{equation*}
    \label{eq:split}
    a( \widetilde {\mathcal P}v,v) = a( \mathcal Pv,v) + a( E(A)v,v ).
\end{equation*}
From the spectral bounds for $\mathcal P(A)$, cf. \eqref{eq:exact-spectral-bounds}, and the bound of the perturbation term \eqref{prop:perturbation-bound}, we obtain 
\begin{equation}
a( \widetilde {\mathcal P}v,v )
\ge C_1 \|v\|_a^2 - \eta \|v\|_a^2 =(C_1-\eta)\|v\|_a^2, \quad \text{and}\quad  a( \widetilde {\mathcal P}v,v) \le C_2 \|v\|_a^2 + \eta \|v\|_a^2 = (C_2+\eta)\|v\|_a^2.
\end{equation}
The assumption $\eta<C_1$ ensures that the lower bound is strictly positive, and hence $\widetilde {\mathcal P}$ is uniformly positive definite with respect to the energy inner product. Finally, by the symmetry and positive definiteness of $\widetilde {\mathcal P}$, we can conclude that its spectrum is contained in the interval $[C_1-\eta,\, C_2+\eta]$. Consequently, the condition number of $\widetilde {\mathcal P}$ satisfies
\begin{equation}
\kappa(\widetilde {\mathcal P}) \le \frac{C_2+\eta}{C_1-\eta}. 
\end{equation}
\end{proof}

\begin{remark}
    While positive definiteness of the approximate local operator is not  immediate from the construction alone, Theorem \ref{thm:oo-stability} shows that the resulting global preconditioned operator remains positive-definite provided the perturbation is sufficiently small, precisely if $\eta<C_1$. Moreover, throughout the numerical experiments presented in Section \ref{sec:6}  the offline-online preconditioner $\widetilde{\mathcal B}$  was always found to be positive definite.
\end{remark}

Finally, note that $\widetilde {\mathcal P}= \widetilde {\mathcal B}\circ \mathcal L$ is precisely the operator governing the PCG iteration for the system $\mathcal L(A)u_h = f$. Hence, the spectral bounds established above directly determine the convergence bounds of the PCG method and follows directly from \eqref{eq:P-cvg}.

\begin{corollary}[PCG convergence]
\label{cor:cvg}
Let $u_h \in V_h$ denote the reference solution of $\mathcal Lu=f$, and let $\Tilde{u}_h^{(k)}$ be the $k$-th iterate of the preconditioned conjugate gradient method with preconditioner $\widetilde {\mathcal B}$. Under the assumptions of Theorem \ref{thm:oo-stability}, the iterates satisfy
\begin{equation}
\|u_h-\tilde{u}_h^{(k)}\|_{a} \;\le\; 2\left( \frac{\sqrt{\kappa(\widetilde {\mathcal P})}-1} {\sqrt{\kappa(\widetilde {\mathcal P})}+1} \right)^k \|u_h-\tilde{u}_h^{(0)}\|_{{a}}\,.
\end{equation}
\end{corollary}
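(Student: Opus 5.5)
The plan is to reduce Corollary \ref{cor:cvg} to the classical PCG estimate \eqref{eq:P-cvg}, applied with $\widetilde{\mathcal B}$ in place of $\mathcal B$. That estimate requires only that the preconditioner be symmetric and positive definite. The convergence factor is then governed by the condition number of the preconditioned operator, taken in the energy inner product.

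\emph{Step 1: $\widetilde{\mathcal B}$ is a valid PCG preconditioner.} Symmetry of $\widetilde{\mathcal B}$, in the sense $\langle g_1,\widetilde{\mathcal B} g_2\rangle=\langle g_2,\widetilde{\mathcal B} g_1\rangle$, was shown in the proof of the symmetry proposition for $\widetilde{\mathcal P}$: each $\widetilde{\mathcal B}_i$ is a linear combination of operators satisfying \eqref{prop:B}.

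For positive definiteness, take any $g\in V_h'$. Since $\mathcal L$ is an isomorphism $V_h\to V_h'$, we can write $g=\mathcal L v$. Then
\[
\langle g,\widetilde{\mathcal B} g\rangle=\langle \mathcal L v,\widetilde{\mathcal B}\mathcal L v\rangle=a(\widetilde{\mathcal P}v,v)\ge (C_1-\eta)\|v\|_a^2>0
\]
for $v\neq 0$, by Theorem \ref{thm:oo-stability} together with the assumption $\eta<C_1$. So $\widetilde{\mathcal B}$ is SPD on $V_h'$. This also makes $\widetilde{\mathcal P}$ self-adjoint and positive definite in $a(\cdot,\cdot)$, so its spectrum is real and its condition number is well defined.

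\emph{Step 2: apply the PCG estimate.} Standard PCG theory with an SPD preconditioner gives the Chebyshev-type bound \eqref{eq:P-cvg}. In that bound, $\kappa$ is the ratio of the extreme eigenvalues of the $a$-self-adjoint operator $\widetilde{\mathcal B}\mathcal L=\widetilde{\mathcal P}$. Concretely, the $k$-th PCG iterate minimizes the energy error over $u_h^{(0)}+\mathcal K_k$, and one estimates that minimum by $\min_{p_k(0)=1}\max_{\lambda\in\sigma(\widetilde{\mathcal P})}|p_k(\lambda)|$.

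Theorem \ref{thm:oo-stability} gives $\sigma(\widetilde{\mathcal P})\subset[C_1-\eta,C_2+\eta]$, so the stated estimate follows with $\kappa(\widetilde{\mathcal P})$. Since $\kappa\mapsto(\sqrt\kappa-1)/(\sqrt\kappa+1)$ is increasing, one may also replace $\kappa(\widetilde{\mathcal P})$ by the bound $(C_2+\eta)/(C_1-\eta)$.

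The only delicate point is Step 1, namely checking that $\widetilde{\mathcal B}$ is genuinely SPD, since the individual $\widetilde{\mathcal B}_i$ may have negative weights $\mu_0^{(i)}$. This is exactly why I route positivity through the global bound of Theorem \ref{thm:oo-stability} rather than trying to establish it patch by patch.
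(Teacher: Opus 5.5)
Your proposal is correct and follows the same route as the paper. The paper simply invokes the classical PCG estimate \eqref{eq:P-cvg} for $\widetilde{\mathcal P}=\widetilde{\mathcal B}\circ\mathcal L$, using its $a$-symmetry and the spectral bounds of Theorem \ref{thm:oo-stability}; your check that $\langle g,\widetilde{\mathcal B}g\rangle=a(\widetilde{\mathcal P}v,v)>0$ for $g=\mathcal L v\neq 0$ just makes explicit the positive definiteness of $\widetilde{\mathcal B}$ that the paper leaves implicit.
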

Theorem \ref{thm:oo-stability} and Corollary \ref{cor:cvg} show that the offline-online approximation preserves the convergence properties of the exact preconditioner whenever the perturbation constant $\eta$ is sufficiently small. More precisely, since $C_1$ and $C_2$ are independent of the mesh parameters, the PCG convergence rate is robust under mesh refinement, provided that $\eta$ is uniformly bounded above by $C_1$. The remaining question is how the perturbation constant $\eta$ can be estimated in practice. The following proposition derives an a posteriori computable upper bound for $\eta$ in terms of patch-local quantities. This provides a practical criterion for assessing the size of the perturbation within the offline-online framework. 

\begin{proposition}
\label{prop:eta}
Define for $i \in \mathcal{N}$
\begin{equation}
  		E_{i}^2\coloneqq \max_{v\in V_h  \setminus\{0\}}\frac{\|\sum_{\ell=0}^{\texttt{N}_{\mathrm{ref}}}\mu_\ell^{(i)} A^{-1/2}(A-A^{(\ell)})\nabla \mathcal P_i^{(\ell)}v\|_{L^2(\omega_i)}^2}{\|v\|_{a,\omega_i}^2}.
\end{equation}
Then, for any $v \in V_h$ and maximum possible patch overlap $C_\mathrm{overlap}$, it holds that
\begin{equation}
    	\|E(A)v\|_a \leq C_\mathrm{overlap} \Big( \max_{i\in \mathcal{N}}E_i\Big) \|v\|_a.
\end{equation}
Consequently, the perturbation constant $\eta$ satisfies $\eta \leq C_\mathrm{overlap} \Big( \max_{i\in \mathcal{N}}E_i\Big).$
\end{proposition}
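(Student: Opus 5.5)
The plan is to write the perturbation patchwise and use the identity $\sum_\ell \mu_\ell^{(i)}=1$ from \eqref{eq:mu}. The coarse part is exact, so
\[
E(A)v=\sum_{i\in\mathcal N}\big(\widetilde{\mathcal P}_i v-\mathcal P_i v\big)=\sum_{i\in\mathcal N}\sum_{\ell=0}^{\texttt{N}_{\mathrm{ref}}}\mu_\ell^{(i)}\big(\mathcal P_i^{(\ell)}v-\mathcal P_i v\big)=:\sum_{i\in\mathcal N} e_i ,
\]
and every $e_i$ lies in $V_i$, so it is supported in $\omega_i$.

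\textbf{Step 1: local characterization of $e_i$.} Here I read $\mathcal P_i^{(\ell)}v$ as the local projection with the reference coefficient, i.e., $\int_{\omega_i}A^{(\ell)}\nabla\mathcal P_i^{(\ell)}v\cdot\nabla w=a(v,w)$ for all $w\in V_i$. This matches $\mathcal B_i^{(\ell)}\circ\mathcal L$, because $\mathcal B_i^{(\ell)}$ solves with the form weighted by $A^{(\ell)}$. Subtracting $a(\mathcal P_iv,w)=a(v,w)$ gives
\[
a\big(\mathcal P_i^{(\ell)}v-\mathcal P_iv,w\big)=\int_{\omega_i}(A-A^{(\ell)})\nabla\mathcal P_i^{(\ell)}v\cdot\nabla w\,dx\qquad\forall w\in V_i.
\]
Multiplying by $\mu_\ell^{(i)}$ and summing over $\ell$ yields
\[
a(e_i,w)=\int_{\omega_i}\sum_\ell\mu_\ell^{(i)}(A-A^{(\ell)})\nabla\mathcal P_i^{(\ell)}v\cdot\nabla w .
\]
I take $w=e_i\in V_i$ and insert $A^{-1/2}A^{1/2}$. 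Cauchy--Schwarz then gives
\[
\|e_i\|_{a}^2\le \Big\|\sum_\ell\mu_\ell^{(i)}A^{-1/2}(A-A^{(\ell)})\nabla\mathcal P_i^{(\ell)}v\Big\|_{L^2(\omega_i)}\|e_i\|_{a,\omega_i},
\]
and hence $\|e_i\|_a\le E_i\|v\|_{a,\omega_i}$ by the definition of $E_i$. The supremum in $E_i$ is over $v\in V_h$ with the local norm in the denominator. This is fine because $\mathcal P_i^{(\ell)}v$ depends only on $v|_{\omega_i}$ through $a(v,w)$ with $w\in V_i$. If $\|v\|_{a,\omega_i}=0$, then $\mathcal P_i^{(\ell)}v=0$, so the quantity is well defined.

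\textbf{Step 2: global assembly via bounded overlap.} To combine the patches, I use the finite overlap pointwise. At almost every $x$, at most $C_{\mathrm{overlap}}$ of the $e_i$ are nonzero, so $|\sum_i\nabla e_i|^2\le C_{\mathrm{overlap}}\sum_i|\nabla e_i|^2$. Integrating against $A$ gives
\[
\|E(A)v\|_a^2\le C_{\mathrm{overlap}}\sum_i\|e_i\|_a^2\le C_{\mathrm{overlap}}\big(\max_iE_i\big)^2\sum_i\|v\|_{a,\omega_i}^2\le C_{\mathrm{overlap}}^2\big(\max_iE_i\big)^2\|v\|_a^2 ,
\]
where the last step uses the overlap again. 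Taking square roots gives the claimed bound. Finally, $|a(E(A)v,v)|\le\|E(A)v\|_a\|v\|_a$ yields $\eta\le C_{\mathrm{overlap}}\max_iE_i$.

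\textbf{Main obstacle.} The only delicate point is Step 1: fixing the correct meaning of $\mathcal P_i^{(\ell)}$ so that its defining equation carries the coefficient $A^{(\ell)}$ on the left and the true form $a(v,w)$ on the right. The error identity needs exactly this, and the difference of coefficients must be supported within $\omega_i$. The rest is routine Cauchy--Schwarz and overlap counting.
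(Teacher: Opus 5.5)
Your proof is correct and follows the paper's argument. It uses the same patchwise splitting, where the exact coarse part drops out. It derives the same local error identity from $\int_{\omega_i} A^{(\ell)}\nabla\mathcal P_i^{(\ell)}v\cdot\nabla w=a(v,w)$ together with $\sum_\ell\mu_\ell^{(i)}=1$. It then applies the same $A^{-1/2}A^{1/2}$ Cauchy--Schwarz step to get $\|e_i\|_a\le E_i\|v\|_{a,\omega_i}$.

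The only difference is cosmetic and lies in the assembly step. You use the pointwise bound $|\sum_i\nabla e_i|^2\le C_{\mathrm{overlap}}\sum_i|\nabla e_i|^2$. The paper instead tests $\|E(A)v\|_a^2$ against $e=E(A)v$ and applies Cauchy--Schwarz over the patches, using the overlap bound for both $v$ and $e$. Both routes give the same constant $C_{\mathrm{overlap}}$.
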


\begin{proof}
Recall that $\mathcal B_0$ and therefore also $\mathcal P_0$ is computed exactly for any given $A$. Then we obtain that
    \begin{align}
 			\|E(A)v\|_a^2& =\|(\widetilde {\mathcal P}-\mathcal P)v\|_a^2=(A\nabla(\widetilde {\mathcal P}-\mathcal P)v,\nabla \underbrace{(\widetilde {\mathcal P}-\mathcal P)v}_{=:e})\NNN
            &=\sum_{i\geq 1}(A\nabla(\widetilde{\mathcal P}_i- {\mathcal P}_i)v, \nabla e)_{\omega_i}\NNN
        & \leq \sum_{i\geq 1}\|(\widetilde{\mathcal P}_i-\mathcal P_i)v\|_{a, \omega_i}\|e\|_{a, \omega_i}  \displaybreak[3]\\
        & \leq \bigg(\sum_{i\geq 1}\|(\widetilde{\mathcal P}_i-\mathcal P_i)v\|^2_{a, \omega_i}\bigg)^{1/2}\bigg(\sum_{i\geq 1}\|e\|^2_{a, \omega_i}\bigg)^{1/2}
        \label{eq: PP}
 	\end{align}
where the last inequality is achieved by applying the Cauchy-Schwarz inequality. We hence estimate these terms separately. Note that the goal is to estimate $(\widetilde{\mathcal P}_i-\mathcal P_i)$ in terms of $\|v\|_{a,\omega_i}$, so that with the finite overlap of the patches, the final estimate will be independent from the number of patches. By the definition of $\mathcal P_i$, $\widetilde {\mathcal P}_i$, and $A=\sum_{\ell=0}^{\texttt{N}_{\mathrm{ref}}} \mu_\ell^{(i)} A^{(\ell)}$ on $\omega_i$, we have that
	 \begin{align*}
     	\|(\widetilde{\mathcal P}_i-\mathcal P_i)v\|_{a,\omega_i}^2&=(A\nabla(\widetilde{\mathcal P}_i-\mathcal P_i)v, \nabla\underbrace{(\widetilde{\mathcal P}_i-\mathcal P_i)v}_{=:w\in H^1_0(\omega_i)}) \NNN
	 	&=\sum_{\ell=0}^{\texttt{N}_{\mathrm{ref}}}\mu_\ell^{(i)}(A\nabla \mathcal P_i^{(\ell)}v, \nabla w)- (A\nabla \mathcal P_i v, \nabla w)\NNN
        &=\sum_{\ell=0}^{\texttt{N}_{\mathrm{ref}}}\mu_\ell^{(i)}(A\nabla \mathcal P_i^{(\ell)}v, \nabla w) - \sum_{\ell=0}^{\texttt{N}_{\mathrm{ref}}}\mu_\ell^{(i)}(A^{(\ell)}\nabla \mathcal P_i^{(\ell)}v, \nabla w)\NNN
	 	&=\sum_{\ell=0}^{\texttt{N}_{\mathrm{ref}}}(\mu_\ell^{(i)}(A-A^{(\ell)})\nabla \mathcal P_i^{(\ell)}v, \nabla w)\NNN
        &= \sum_{\ell=0}^{\texttt{N}_{\mathrm{ref}}}(\mu_\ell^{(i)}A^{-1/2}(A-A^{(\ell)})\nabla \mathcal P_i^{(\ell)}v, \, A^{1/2}\nabla w).
        \label{eq: Pi}
	 \end{align*}
Now inserting back for $w$, we get
\begin{align*}
    	\|(\widetilde {\mathcal P}_i-\mathcal P_i)v\|_{a,\omega_i}& \leq 	\|\sum_{\ell=0}^{\texttt{N}_{\mathrm{ref}}}\mu_\ell^{(i)}A^{-1/2}(A-A^{(\ell)})\nabla \mathcal P_i^{(\ell)}v\|_{L^2(\omega_i)}.
\end{align*}
Therefore
\begin{align}
    \sum_{i\geq 1}\|(\widetilde {\mathcal P}_i-\mathcal P_i)v\|^2_{a, \omega_i} \le   \sum_{i\geq 1} E_i^2 \|v\|^2_{a, \omega_i} \leq \Big( \max_{i\in \mathcal{N}}E_i\Big)^2\sum_{i\geq 1} \|v\|^2_{a, \omega_i} 
\end{align}
Using that each $x \in \Omega$  belongs to at most  $C_\mathrm{overlap}$ patches, we deduce that 
\begin{align}
\label{eq:v}
\sum_{i\geq 1}\|v\|^2_{a, \omega_i}  
\leq C_\mathrm{overlap} \|v\|^2_a
\end{align}
Using this also for $\|e\|^2_{a, \omega_i}$, we get with \eqref{eq: Pi} and \eqref{eq:v} in \eqref{eq: PP} that
\begin{equation}
\label{eq:result}
  	\|E(A)v\|_a \leq C_\mathrm{overlap} \Big( \max_{i\in \mathcal{N}}E_i\Big) \|v\|_a.  
\end{equation}  
Finally, by the Cauchy-Schwarz inequality
\begin{equation}
\label{eq:eta}
    |a(E(A)v,v)| \leq \|E(A)v\|_a\|v\|_a.
\end{equation}
Combining \eqref{eq:result} and \eqref{eq:eta}, and taking the supremum over all $v \neq 0$, we obtain
\begin{equation*}
   \eta  \leq C_\mathrm{overlap}\Big( \max_{i\in \mathcal{N}}E_i\Big).\qedhere
\end{equation*} 
\end{proof}
Note that the quantities $E_i$ are patch local indicators and can be computed efficiently within the offline-online strategy using the same approach as in \cite{MR4378546}, see also \cite{MR4119333, MR3945233}. Consequently, Proposition \ref{prop:eta} provides an explicit a posteriori upper bound for the perturbation constant $\eta$. In particular the stability assumption of Theorem \ref{thm:oo-stability} is guaranteed whenever $ C_\mathrm{overlap}\Big( \max_{i\in \mathcal{N}}E_i\Big) < C_1$, thereby providing a practical criterion for verifying the stability of the offline-online preconditioner.

\section{Implementation aspects}
\label{sec:5}

In this section, we describe the practical realization of the offline-online preconditioner introduced in Section \ref{sec:3}. Here we explain the main implementation aspects that enables its efficient assembly and application.  We further discuss the associated computational cost, storage requirements, and implementation remarks that are essential for large-scale Monte Carlo simulations.

\subsection{Practical realization of the preconditioner components }
\label{sec:5.1}
The implementation follows the matrix formulations introduced in \eqref{eq:B_matrix} for the exact preconditioner and \eqref{eq:Btilde_matrix} for the offline-online preconditioner. However, the preconditioner is never assembled as a global matrix. Instead, in all methods considered in this paper, the coarse contribution and all patch-local contributions are represented by $\texttt{LinearOperator}$ objects in Python's \texttt{scipy} package and passed directly to the PCG solver. \newline

\noindent\textbf{Patch-local contributions.} For the reference additive Schwarz preconditioner, the patch-local stiffness matrices are assembled and factorized for every realization, and their action is applied through the corresponding factorizations. In the offline-online preconditioner, by contrast, this realization-dependent local factorizations are replaced by the reuse of precomputed reference matrices through algebraic combinations. More precisely, the local contribution of the exact preconditioner associated with each patch $\omega_i$ is given by
\begin{equation}
     B_i(A) := R_i^\top \left( K_{i} \right)^{-1} R_i 
\end{equation}
where $K_i$ is the patch-local stiffness matrix assembled with coefficient $A$ and homogeneous Dirichlet conditions on $\partial\omega_i$. Thus, constructing the exact preconditioner requires the factorization of one local stiffness matrix for every patch in every realization. The offline-online method avoids repeating these realization-dependent setup by using a small offline dictionary of precomputed reference matrices
\begin{equation}
    \widehat B^{(\ell)} := \bigl(\widehat K^{(\ell)}\bigr)^{-1}, \qquad \ell=0,\ldots,N_{\mathrm{ref}}.
\end{equation}
These reference matrices $\widehat K^{(\ell)}$ are computed once on the reference patch $\widehat{\omega}$ and their factorizations are retained as offline data. No information about a particular realization of the coefficient is required at this stage. We stress that the offline dictionary contains only the defect-free reference operator and one single-defect reference operator for each admissible defect position on $\widehat\omega$, and therefore consists of $N_{\mathrm{ref}}+1$ matrices. Its cardinality is determined solely by the local patch geometry and the ratio $H/\varepsilon$ and is independent of both the number of physical patches and the number of coefficient realizations. The same offline data are therefore reused for every patch and throughout the complete Monte Carlo simulation. The resulting computational and storage implications are discussed in Section \ref{sec:runtime} and Remark \ref{subsec:storage}, and are illustrated quantitatively in Experiment \ref{ex:1}.

In the online phase, for a given realization of the coefficient $A(\cdot,\gamma)$, we loop over all interior patches $\omega_i$. From the defect configuration on $\omega_i$, we determine the weights $\{\mu_\ell^{(i)}\}_{\ell=0}^{\texttt{N}_{\mathrm{ref}}}$ by identifying the active defect cells and matching them with their corresponding positions on the reference patch. The approximate local matrix of the physical patch $\omega_i$ is then assembled algebraically as
\begin{equation}
\label{eq:Bi}
    \widetilde B_{i} :=\mathcal{T}_i \left(  \sum_{\ell=0}^{N_{\mathrm{ref}}} \mu_\ell^{(i)}\widehat B^{(\ell)} \right) \mathcal{T}_i^\top,
\end{equation}
where $\mathcal{T}_i$ is the index permutation that maps the degrees of freedom of the reference patch to those of $\omega_i$. The corresponding contribution to the global preconditioner matrix is 
\begin{equation}
    R_i^\top \widetilde B_i\,R_i.
\end{equation}
Importantly, no fine-scale linear systems are solved in the online phase; only linear combinations and index permutations are required.

\begin{remark}
    Although additive-Schwarz preconditioner can also be implemented in a fully matrix-free manner by replacing the local factorizations with inner iterative solvers, we do not adopt this approach since we assume exact patch-local computations in this study. Furthermore, considering the rather small patch sizes, the cost of these local factorization is rather low.
\end{remark}

\noindent\textbf{Coarse contribution.} We next describe the common realization of the coarse contribution shown in \eqref{eq:B_matrix} as well as \eqref{eq:Btilde_matrix}. That is
\begin{equation}
\label{eq:B0}
    R_0^\top K_0(A)^{-1}R_0
\end{equation}
where $K_0(A)$ is the realization-dependent coarse stiffness matrix. Here $R_0^\top$ denote the canonical prolongation from the coarse space $V_H$  into the fine-scale space $V_h$, represented in the corresponding nodal basis. Since  $\dim V_H \ll  \dim V_h$,  factorizing the coarse matrix is inexpensive compared with constructing the fine-scale patch contributions.  The main question here is assembling $K_0(A)$ efficiently for many realizations of the defect coefficient. This is achieved by an exact offline-online strategy at the coarse-element level. In the offline phase, we fix a reference coarse element $\widehat{T}$ and construct reference coarse-element stiffness matrices $\{K_{0,\widehat{T}}^{(\hat{\ell})}\}_{\hat{\ell}=0}^{\hat{\texttt{N}}_{{\mathrm{ref}}}}$ corresponding to the background coefficient and all single-defect positions $\hat{\ell}= 1 , \dots \hat{\texttt{N}}_{\mathrm{ref}}$. In the online phase, for each coarse element $T\in\mathcal{T}_H$, we determine coefficients $\{\lambda_{\hat{\ell}}^{(T)}\}$ such that
\begin{equation}
 A|_T = \sum_{\hat{\ell}} \lambda_{\hat{\ell}}^{(T)} A^{(\hat{\ell})}_{\widehat{T}}, 
\end{equation}
and assemble
\begin{equation}
\label{eq:lambda}
K_{0,T}(A) = \sum_{\hat{\ell}} \lambda_{\hat{\ell}}^{(T)} K_{0,\widehat{T}}^{(\hat{\ell})}.
\end{equation}
The global coarse stiffness matrix is then obtained by standard finite element assembly and factorized to form the coarse contribution $R_0^\top K_0(A)^{-1}R_0$. Finally the coarse and patch-local contributions are combined as
\begin{equation}
   R_0^\top K_0(A)^{-1}R_0 + \sum_{i=1}^{N}R_i^\top\widetilde B_i(A)R_i.
\end{equation}


\subsection{The algorithm}
\label{sec:5.2}
Algorithm \ref{alg:BP} summarizes the offline-online construction of the additive preconditioner and its use within a standard preconditioned conjugate gradient method. We additionally remark on possible further implementation improvements.

\begin{remark}{(Incremental representation)}
An equivalent representation of the coarse-element matrix in step (17) of Algorithm \ref{alg:BP} is
\begin{equation}
K_{0,T}(A) = K_{0,\widehat{T}}^{(0)} + \sum_{q\in S_T} \left( K_{0,\widehat{T}}^{(q)} - K_{0,\widehat{T} }^{(0)} \right),
\end{equation}
where $S_T$ denotes the set of defect positions within $T$. Both formulations are algebraically identical; the $\lambda$-representation is conceptually simpler, while the incremental form may reduce scaling costs.
\end{remark}

\begin{remark}{(Symmetry reduction)} The offline cost for the coarse-scale matrices $ B_0$ can be further reduced by exploiting geometric symmetries of the coarse element. Defect locations that are symmetry-equivalent lead to stiffness matrices that differ only by permutation of degrees of freedom. Thus, only one representative per symmetry class needs to be stored.
\end{remark}

\begin{remark} In principle, similar symmetry arguments apply to patch-local matrices. However, due to the overlap of patches and the larger local dimensions, the required permutation logic is significantly more complex. For this reason, symmetry reduction is easily applicable only at the coarse level.
\end{remark} 

\begin{algorithm}[h]
\caption{Offline-online construction of the additive preconditioner}
\label{alg:BP}
\begin{algorithmic}[1]
\small
\State \textbf{Input:} Background coefficient $A_\varepsilon$, defect coefficient $B_\varepsilon$, reference cell $Q$, right-hand side $f$
\State Fix $\widehat{\omega} \subset \mathcal{T}_H$ and  $\widehat{T} \subset \widehat{\omega}$.
\Comment{Offline phase}

\State Construct $\{A^{(\ell)}\}_{\ell=0}^{\texttt{N}_{\mathrm{ref}}}$ on $\widehat{\omega}$ and $\{A^{(\hat{\ell})}\}_{\hat{\ell}=0}^{\hat {\texttt{N}}_{\mathrm{ref}}}$ on  $\widehat{T}$.
\For{$\ell = 0,\ldots,\texttt{N}_{\mathrm{ref}}$}
  \State Compute and retain $\widehat B^{(\ell)}$ associated with $A^{(\ell)}$.
\EndFor

\For{$\hat{\ell} = 0,\ldots,\hat {\texttt{N}}_{\mathrm{ref}}$}
  \State Assemble and retain $ K_{0,\widehat{T}}^{(\hat{\ell})}$ associated with $A^{(\hat{\ell})}.$
\EndFor
\Comment{End offline phase}

\For{each $A(\cdot,\gamma)$}
\Comment{Online phase}

  \For{each $\omega_i$}
    \State Determine weights $\{\mu_\ell^{(i)}\}_{\ell=0}^{\texttt{N}_{\mathrm{ref}}}$ according to \eqref{eq:local} and \eqref{eq:mu}.
    \State Construct $\widetilde{B}_i$ as in \eqref{eq:Bi}, and determine $R_i$.
  \EndFor

  \For{each $T \in \mathcal{T}_H$}
    \State Determine weights
    $\{\lambda_{\hat{\ell}}^{(T)}\}_{\hat{\ell}=0}^{\hat{\texttt{N}}_{\mathrm{ref}}}$ and assemble $   K_{0,T}$ as in \eqref{eq:lambda}.
  \EndFor

  \State Compute the coarse contribution from \eqref{eq:B0}.
  \State For a residual vector $r$, use the operator action
  \begin{equation}  R_0^\top K_0(A)^{-1}R_0r + \sum_i R_i^\top\widetilde { B}_iR_i r \end{equation}
  \State to solve $\mathcal L(A)u = f$ by PCG as a linear system.
\EndFor
\end{algorithmic}
\end{algorithm}

\subsection{Run-time complexity}
\label{sec:runtime}

A natural benchmark for any approximate preconditioning strategy is the preconditioner based on the exact coefficient of the realization. This typically shows the best performances but can be too costly when many realizations must be solved. In the rare-defect regime considered here, an equally natural baseline is to construct a single preconditioner from the defect-free background coefficient and reuse it across all samples. When most realizations are close to the background, this approach can be effective while essentially eliminating the setup cost. The proposed offline-online strategy is designed to retain much of the accuracy of the sample-dependent construction while amortizing expensive local computation across samples.

Therefore, in this section, we compare the computational complexity of the proposed offline-online domain decomposition (OO-DD) preconditioner (in Algorithm \ref{alg:BP}) against these two baselines: (i) Direct-DD, i.e., the exact  sample-dependent approach, and (ii) ND-DD, the ``no-defect'' approach, where the preconditioner is constructed once from the defect-free background and reused for all samples. The Direct-DD and OO-DD preconditioners share the same algebraic structure, differing only in the use of exact versus offline-online approximated local patch operators. 

All methods are used as preconditioners in a standard preconditioned conjugate gradient (PCG) solver for the discrete linear system  $\mathcal L(A)u_h = f$. Throughout the discussion, we focus on the cost of constructing and applying the preconditioner. The per-iteration cost of PCG, consisting of one fine-scale matrix-vector product and one application of the preconditioner, is comparable for all three methods and is therefore treated separately.

We assume a scale separation $h \le \varepsilon < H$, where the fine mesh resolves all microscale features of the coefficient. The coarse mesh $\mathcal{T}_H$ is assumed to be aligned with the periodic background coefficient. We consider vertex-centered patches with fixed overlap and layer size. The total number of interior patches satisfies
$
\texttt{N}_{\texttt{P}} \sim H^{-d},
$ up to a constant depending only on the mesh connectivity. In the following, we therefore do not distinguish between the number of coarse elements and the number of patches, as these quantities are of the same order.

For simplicity, we only introduce the following representative costs. $\texttt{T}_{\mathrm{patch}}$: cost of solving one fine-scale local problem on a patch; $\texttt{T}_{\mathrm{comb}}$: cost of forming a linear combination of precomputed patch operators; $\texttt{T}_{\mathrm{PCG}}$: cost of one PCG iteration.
At the theoretical level, all patch-level solves involve matrices of comparable size and structure, so that a single representative cost $\texttt{T}_{\mathrm{patch}}$ is sufficient. Moreover, we denote $\texttt{k}^{\mathrm{D}}$, $\texttt{k}^{\mathrm{ND}}$, and $\texttt{k}^{\mathrm{OO}}$ to be the number of PCG iterations required to reduce the residual below a prescribed stopping tolerance for the Direct-DD, ND-DD, and OO-DD preconditioners, respectively. Throughout the comparison, the tolerance is assumed to be fixed and identical for all methods. We further let $\texttt{N}_{\mathrm{samp}}$ be the number of random samples used.

For the Direct-DD method, the local operators are assembled exactly for each realization of the coefficient. There is no offline phase. For each sample coefficient, one must solve a local fine-scale problem on every patch resulting in a per sample online cost $  \texttt{N}_{\texttt{P}} \, \texttt{T}_{\mathrm{patch}}$. The total cost for $\texttt{N}_{\mathrm{samp}}$ samples is therefore 
\begin{equation}
\texttt{T}^{\mathrm{D}} \;\sim\; \texttt{N}_{\mathrm{samp}} \left( \texttt{N}_{\texttt{P}} \, \texttt{T}_{\mathrm{patch}} + \texttt{k}^{\mathrm{D}} \, \texttt{T}_{\mathrm{PCG}} \right).
\end{equation}

In the ND-DD method, the preconditioner is constructed using only the periodic background coefficient $A_\varepsilon$ and reused unchanged for all samples. Due to periodicity and translation invariance, all interior patches are equivalent. Consequently, the local operators need to be computed only once on a single reference patch and can then be reused by translation. This suggests one time  offline cost $ \texttt{T}_{\mathrm{patch}}$ and zero per sample online cost. The total cost is therefore
\begin{equation}
\texttt{T}^{\mathrm{ND}} \;\sim\; \texttt{T}_{\mathrm{patch}} + \texttt{N}_{\mathrm{samp}} \left( \texttt{k}^{\mathrm{ND}} \, \texttt{T}_{\mathrm{PCG}} \right).
\end{equation}

In the proposed OO-DD method, a small number of reference patch operators corresponding to different single-defect configurations are computed offline. In the online phase, the local operators for a given sample are obtained by linear combinations of these reference operators. No local fine-scale solves are required online. Hence, the one-time offline cost is $\texttt{N}_{\mathrm{ref}} \, \texttt{T}_{\mathrm{patch}}$, while the per-sample online cost is $\texttt{N}_{\texttt{P}} \, \texttt{T}_{\mathrm{comb}}$. Here, $\texttt{N}_{\mathrm{ref}}$ denotes, for simplicity, the total number of reference configurations, including the defect-free background configuration. The total cost is therefore
\begin{equation}
\texttt{T}^{\mathrm{OO}} \;\sim\; \texttt{N}_{\mathrm{ref}} \, \texttt{T}_{\mathrm{patch}} + \texttt{N}_{\mathrm{samp}} \left( \texttt{N}_{\texttt{P}} \, \texttt{T}_{\mathrm{comb}} + \texttt{k}^{\mathrm{OO}} \, \texttt{T}_{\mathrm{PCG}} \right).
 \end{equation}

To understand when the OO-DD is advantageous over the other two methods, one may consider the break-even number of samples $\texttt{N}_{\mathrm{samp}}^\ast$. With respect to the Direct-DD, equating $\texttt{T}^{\mathrm{OO}} = \texttt{T}^{\mathrm{D}}$ yields 
\begin{equation}
\texttt{N}_{\mathrm{samp}}^\ast \approx  \frac{ \texttt{N}_{\mathrm{ref}} \, \texttt{T}_{\mathrm{patch}} }{ \texttt{N}_{\texttt{P}} \bigl( \texttt{T}_{\mathrm{patch}} - \texttt{T}_{\mathrm{comb}} \bigr)
+ \left( \texttt{k}^{\mathrm{D}} - \texttt{k}^{\mathrm{OO}} \right) \texttt{T}_{\mathrm{PCG}} }.
\end{equation}
Since $\texttt{T}_{\mathrm{patch}} \gg \texttt{T}_{\mathrm{comb}}$, the denominator is large and the break-even number of samples is typically small, i.e., the one-time offline-dictionary cost is amortized after a relatively small number of samples.
Similarly, with respect to ND-DD, equating $\texttt{T}^{\mathrm{OO}} = \texttt{T}^{\mathrm{ND}}$ gives
\begin{equation}
\texttt{N}_{\mathrm{samp}}^\ast \approx \frac{ \left( \texttt{N}_{\mathrm{ref}} - 1 \right) \texttt{T}_{\mathrm{patch}} }{ \left(\texttt{k}^{\mathrm{ND}} - \texttt{k}^{\mathrm{OO}} \right) \texttt{T}_{\mathrm{PCG}} - \texttt{N}_{\texttt{P}} \, \texttt{T}_{\mathrm{comb}} }.
\end{equation}
Here, the offline-online method becomes advantageous once the reduction in iteration counts is sufficient to compensate for the additional offline cost. This shows that the offline-online method is getting more and more advantageous as the number of Monte-Carlo simulations in the experiment increases.

\begin{remark}[Asymptotic comparison] We provide a coarse asymptotic comparison of the three methods to highlight their dominant cost contributions.
 We denote by $n_{\mathrm{loc}}$ the dimension of the local finite element space on a single patch, and $N_H := H^{-d}$ the total number of coarse elements. Let $C_{\mathrm{loc}}(n_{\mathrm{loc}})$ and $C_{\mathrm{comb}}(n_{\mathrm{loc}})$ be the cost of solving one local problem exactly, and the cost of forming a linear combination of precomputed local factorizations respectively. In all three methods, the coarse operator is assembled element-wise with linear complexity $\mathcal{O}(N_H)$. Since this cost is of lower order compared to the local patch contributions, it is omitted in the following comparison. In the Direct-DD method, all local problems are solved independently for each realization. The dominant cost therefore scales as \( \mathcal{O}\bigl( \texttt{N}_{\mathrm{samp}} \, N_p \, C_{\mathrm{loc}}(n_{\mathrm{loc}}) \bigr), \) which grows linearly with both the number of samples and patches. In the ND-DD method, all local operators are constructed only once for the background coefficient and reused unchanged for all samples. The setup cost is therefore \( \mathcal{O}\bigl(C_{\mathrm{loc}}(n_{\mathrm{loc}}) \bigr), \) while the per-sample cost arises solely from the PCG iteration with a fixed preconditioner, and scales as \(  \texttt{N}_{\mathrm{samp}} \, \mathcal{O}\bigl( k^{\mathrm{ND}} \, n_h \bigr), \) where $n_h$ is the number of fine-scale degrees of freedom. In practice, this iteration count may increase significantly in the presence of strong defects or high contrast. In the OO-DD method, only $\texttt{N}_{\mathrm{ref}}$ reference patch problems are solved exactly in an offline stage. The corresponding setup cost is therefore \( \mathcal{O}\bigl(
\texttt{N}_{\mathrm{ref}}\, C_{\mathrm{loc}}(n_{\mathrm{loc}}) \bigr).\) In the online stage, the local operators are obtained by recombination of the precomputed reference data. The per-sample cost is thus \(\texttt{N}_{\mathrm{samp}} \, \mathcal{O}\bigl( \texttt{N}_p \, C_{\mathrm{comb}}(n_{\mathrm{loc}}) \bigr),\) where typically $C_{\mathrm{comb}}(n_{\mathrm{loc}}) \ll
C_{\mathrm{loc}}(n_{\mathrm{loc}})$.

If the local patch problems are solved by direct factorization on spaces of dimension $n_{\mathrm{loc}}$, then $C_{\mathrm{loc}}(n_{\mathrm{loc}})=\mathcal{O}(n_{\mathrm{loc}}^3)$. Alternatively, if the local problems are solved approximately using optimal methods, one may assume $C_{\mathrm{loc}}(n_{\mathrm{loc}})=\mathcal{O}(n_{\mathrm{loc}})$. The recombination cost $C_{\mathrm{comb}}(n_{\mathrm{loc}})$ depends on the chosen implementation but is always of lower order. In all cases, the qualitative comparison between the three methods remains unchanged.
\end{remark}

The three preconditioners therefore represent different trade--offs. The Direct-DD method provides the smallest iteration numbers and thus the fastest convergence per sample, but at the price of the largest per-sample setup cost. The ND-DD method is computationally the cheapest in terms of setup, since the preconditioner is reused for all samples, but this advantage is often outweighed by a substantial increase in iteration numbers when defects are present. The OO-DD method balances these two extremes: its offline stage is more expensive than that of the ND-DD, but this cost is averaged over many samples, while its per-sample setup is much cheaper than that of the Direct-DD and its iteration numbers are typically close to those of the Direct-DD. Hence, for large Monte-Carlo simulations, the OO-DD preconditioner provides the most favorable compromise between computational cost and convergence speed, and its one-time offline cost is increasingly amortized as the number of Monte Carlo samples grow. The numerical experiments in Section \ref{sec:6} underline this behavior and illustrate the practical relevance of the offline-online strategy.

\begin{remark} [Storage and parallelization]
\label{subsec:storage}
In the OO-DD method, all offline data are stored on a single reference patch and reused across the domain by translation, permutation, and symmetry arguments. The dominant offline memory  consumption is due to the patch-local offline data associated with the $\texttt{N}_{\mathrm{ref}}$ reference defect configurations. Under an optimal setup, $ \mathrm{Mem}_{\mathrm{off}}  \;=\;  \Theta\!\bigl(\texttt{N}_{\mathrm{ref}}\,n_{\mathrm{loc}}\bigr)$. In addition, a small dictionary of reference coarse element blocks is stored, corresponding to a fixed number of symmetry classes. Since this contribution is independent of $h$, $H$, and the number of samples, this storage memory is asymptotically negligible. Both the offline loop over reference configurations and the online loops over patches and coarse elements can be completely parallelized. 
\end{remark}
\begin{remark}
    A fully matrix-free implementations with an inexact inner solver might become advantageous in situations where we have to work with three-dimensional problems with complex coefficients that are computationally expensive. However, the amortized arguments underlying the OO-DD is independent of how the local solve in Direct-DD is realized. Whether $T_\mathrm{patch}$ is the cost of a sparse factorization on a patch or the cost of a converged inner iterative solver at a high tolerance, the cost given by $T_\mathrm{patch} \times \texttt{N}_\mathrm{samp}$ is still subject to increase along with the number of samples in the Direct-DD computation. In contrast, OO-DD have one-time offline cost of $\texttt{N}_\mathrm{ref}$ independent of the number of samples considered and thereafter, only an online linear combination cost $\texttt{N}_\mathrm{comb}$ that would only change in a negligible amount depending on the defect probability--which is small in the rare defect regime of multiscale materials.
\end{remark}

\section{Numerical Experiments}
\label{sec:6}

In this section, we investigate the practical performance of the proposed OO-DD preconditioner. All experiments are conducted in two spatial dimensions on the unit square $\Omega=[0,1]^2$ with homogeneous Dirichlet boundary conditions. The goal of the experiments is threefold: (i) to assess the accuracy of the OO-DD preconditioner in the presence of random defects, (ii) to compare its efficiency with the Direct-DD and the ND-DD in terms of PCG iteration counts, and (iii) to demonstrate that the observed behavior persists across different defect geometries.

We consider the diffusion problem \eqref{eq:model-problem} discretized using $Q_1$ finite elements on a fine mesh $\mathcal{T}_h$ that resolves all microscale features of the coefficient. A nested coarse mesh $\mathcal{T}_H$ is used to define the subspace decomposition. The meshes are aligned with the periodic background coefficient and satisfy $h \le \varepsilon < H$. In the following experiments, we use the fixed patch size introduced in Section \ref{sec:2.2}, namely vertex-centered patches consisting of one layer of coarse elements. Unless stated otherwise, the right-hand side is chosen as
\begin{equation}
f(x,y)=\sin(\pi x)\sin(\pi y).
\end{equation}
We have additionally verified all implementations for the constant right-hand side $f\equiv 1$, with no qualitative changes in the observed behavior. A direct finite element solution on the fine mesh $\mathcal{T}_h$ is used as a reference solution for all accuracy comparisons.

We compare the proposed OO-DD preconditioner with the two standard preconditioning strategies introduced in Section~\ref{sec:runtime} both within the same subspace-decomposition framework: Direct-DD and ND-DD preconditioners. In the following experiments, we report convergence behavior and average PCG iteration counts for varying defect probability, contrast, and defect geometry. These quantities serve as practical indicators for the robustness and efficiency of the respective preconditioners.

In all cases, the resulting linear systems are solved using the preconditioned conjugate gradient method as implemented in \texttt{scipy.sparse.linalg.cg} package, with a fixed relative residual tolerance of $10^{-6}$ and a per sample maximum iteration count $200$. Unless stated otherwise, we use the discretization parameters $h=2^{-7}$, $\varepsilon=2^{-5}$, and $H=2^{-4}$. All experimental results are averaged over $150$ independent Monte-Carlo realizations of the coefficient.
All experiments discussed in this section are fully reproducible. The corresponding code and data are publicly available at \cite{dilini_kolombage_2026_21769877}.

\subsection{Random erasure model}
\label{ex:1}
We first consider the random erasure model introduced in \eqref{coeff} and depicted in Figure~\ref{coef_patterns}, where defect inclusions of fixed shape are activated independently with probability $p$ as explained in Section \ref{subsec:2.1}. Before discussing the solver performance, we quantify the size of the offline dictionary for the discretization parameters considered here in order to relate the practical setup cost discussed in Sections \ref{sec:5.1} and \ref{sec:runtime}. By definition, each interior patch has side length $2H$ in each dimension. Then for the discretization parameters used here, i.e., $h=2^{-7},\, \varepsilon=2^{-5}$ and $H=2^{-4}$, the reference patch $\widehat{\omega}$ contains $16$ admissible defect locations. Together with the defect-free reference configuration the offline dictionary consists of only $17$ reference local operators, each corresponding to a local problem with $(2H/h -1)^2 = 225$ interior degrees of freedom. Since the computational domain contains $225$ physical patches, the Direct-DD requires $225$ local factorizations for every realization, amounting to $33750$ local factorizations over the $150$ Monte-Carlo samples considered here. In contrast the proposed OO-DD reuses the precomputed $17$ reference factorizations in the offline dictionary for every patch over every sample, corresponding to less than $0.1\%$ of local factorizations required by the Direct-DD over the complete experiment. Moreover, this experiment uses only a moderate sample size of $150$. Since the offline dictionary is constructed only once, whereas Direct-DD factorizations scale linearly with the number of realizations, the relative offline setup cost of the OO-DD decrease further as the number of Monte-Carlo samples increases.

Now we summarize the performance of the three preconditioners for increasing defect probability and contrast in Figures \ref{incl-c} and \ref{incl-p}. Figure \ref{incl-c} reports the average number of PCG iterations as a function of the defect probability $p$ for contrasts $\beta/\alpha = 10, 100$, and $500$ (from left to right). For all tested configurations, the PCG method converges for all three preconditioners across all samples. Among the three methods, the Direct--DD preconditioner consistently yields the smallest number of iterations, as expected, since all local patch operators are assembled exactly for each realization of the coefficient. In contrast, the ND--DD preconditioner exhibits a substantial increase in iteration counts as the defect probability $p$ increases and, in particular, as the contrast $\beta/\alpha$ becomes large. This behavior reflects the growing mismatch between the defect-free preconditioner and the true operator when defects become more frequent and stronger. The vertical bars indicate the statistical variation of the iteration counts after removing outlier samples. While ND-DD exhibits the largest variability across samples, the proposed OO-DD preconditioner shows significantly smaller variations and iteration counts that remain close to those of Direct-DD over the full range of defect probabilities. Although a mild increase in the number of iterations can be observed as $p$ grows, the overall behavior demonstrates that the offline-online approximation successfully captures the dominant local effects induced by the defects. Furthermore, Figure \ref{incl-c} illustrates the contrast dependency of the considered methods at a fixed mesh hierarchy. The similar iteration counts of Direct-DD and OO-DD at each contrast is a strong indicator that the offline-online approximation does not introduce any additional deterioration even at high contrasts. The iteration increase observed in both methods as the contrast grows is therefore likely to be inherited from the underlying two-level Schwarz decomposition and is consistent with the limitations of the coarse level approximations, rather than the offline-online approximation.

\begin{figure}[h]
\includegraphics[width=\textwidth]{ 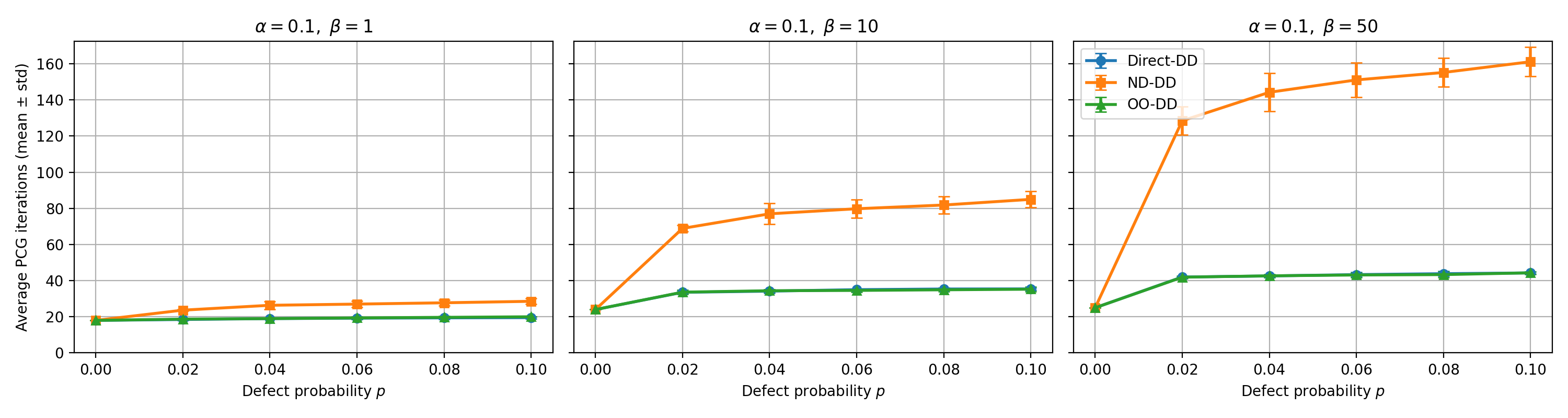}
\caption[0.6\textwidth]{Random erasure model: the average PCG iteration counts against varying defect probabilities for defect contrasts $\beta/\alpha = 10, \, 100$ and $500$ (left to right) respectively.}
\label{incl-c}
\end{figure}

Figure \ref{incl-p} illustrates the convergence histories of the PCG method measured in terms of the root-mean-square error (RMSE) in the energy norm for three given defect probabilities and contrast of $\beta/\alpha = 500$. For all three preconditioners, the energy-norm error decreases steadily with the iteration count, indicating consistent convergence of the method. The convergence curves further highlight clear qualitative differences between the preconditioners. The ND-DD preconditioner exhibits significantly slower decay of the energy error, whereas the proposed OO-DD preconditioner closely follows the convergence behavior of the Direct-DD method across all tested defect probabilities.

\begin{figure}[H]
\includegraphics[width=\textwidth]{ 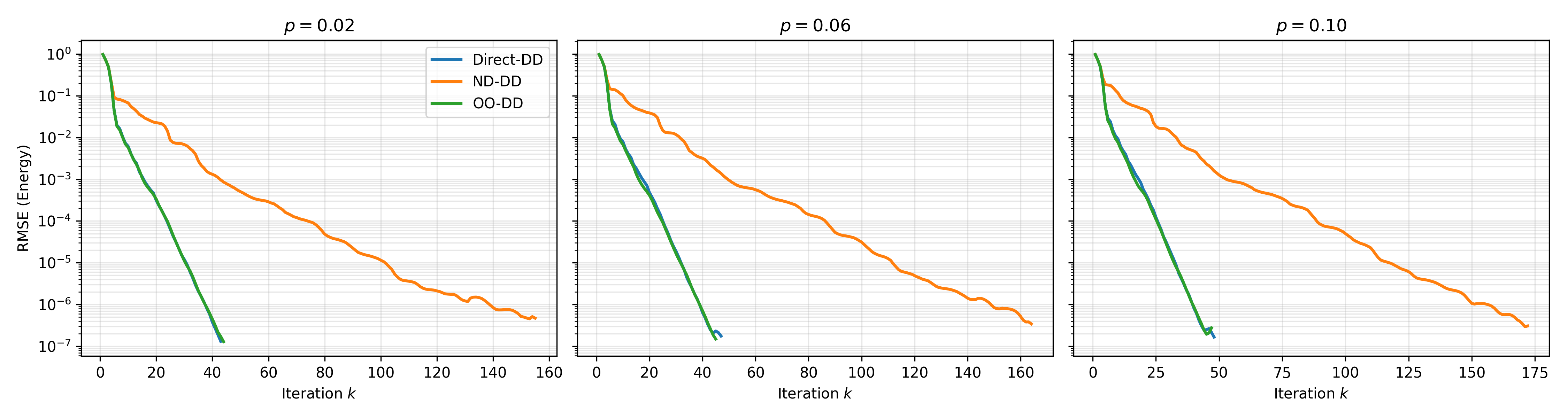}
\caption[0.6\textwidth]{Random erasure model: the RMSE in the energy norm vs iteration count for defect probabilities $p= 0.02, \, 0.06$ and $0.10$ (left to right) respectively at $\beta/\alpha = 500$.}
\label{incl-p}
\end{figure}

We additionally investigate the robustness of the proposed preconditioner under the multiscale mesh refinement in Figure \ref{fig:Hhe_shifted}, where we record the average iteration count over $100$ samples for the random erasure model at a low contrast. In this experiment, we refine the mesh parameters $H,\, \varepsilon$ and $h$ simultaneously while keeping the rations $\varepsilon/H$ and $h/\varepsilon$ fixed. As a result, we obtain the same geometric structure of the local problem allowing us to evaluate the stability of OO-DD method with respect to the uniform refinement of the complete multiscale hierarchy. The slight increase in the iteration count between the first two hierarchy levels (corresponding to $H=2^{-3}$ and $H=2^{-4}$) is attributed to the relatively large discretization at $H=2^{-3}$ for multiscale problems. Beyond this initial increase, the average iteration counts remain nearly constant across all other hierarchy levels suggesting us that the proposed OO-DD preconditioner exhibits good robustness under mesh refinement. In addition to this experiment, we also investigated the dependence of the OO-DD method on the individual mesh parameters by varying $h$ and $H$ respectively, while keeping the remaining two parameters fixed. These experiments exhibited the same behavior and are omitted for brevity. The complete set of experiments are available in the supporting repository \cite{dilini_kolombage_2026_21769877}. We further report that, while offline-online strategy is robust in approximating the Direct-DD even at high coefficient contrasts, Direct-DD has a contrast-dependent convergence behavior as reflected in Proposition \ref{prop: Lem3.1}. 
\begin{figure}[h]
\centering
\includegraphics[width=\textwidth, height=3.5cm]{ 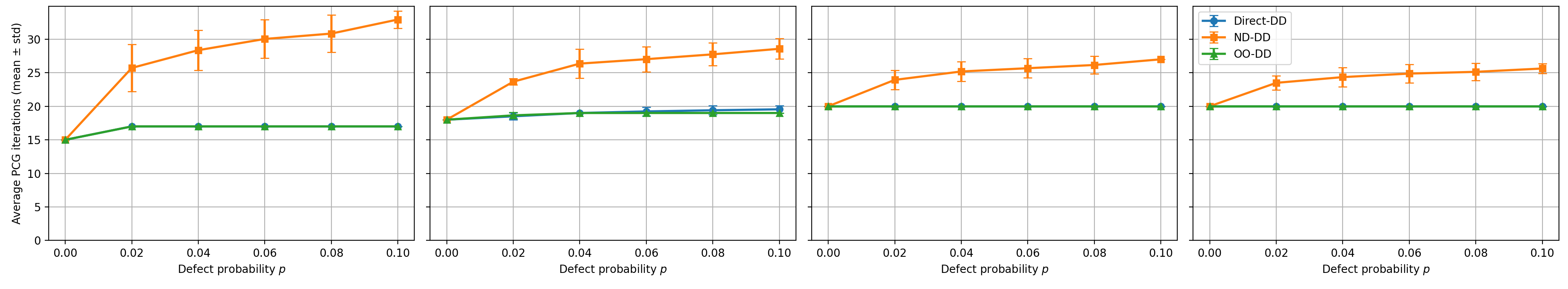}
\caption[0.6\textwidth]{The change of iteration counts for the random erasure model over $100$ samples (with $\beta/\alpha = 10$) at the mesh parameters $(H, \varepsilon, h)= (2^{-3},2^{-4},2^{-6}),$ $(2^{-4},2^{-5},2^{-7}),\, (2^{-5},2^{-6},2^{-8})$ and $(2^{-6},2^{-7},2^{-9})$ respectively from left to right.}
\label{fig:Hhe_shifted}
\end{figure}

\subsection{Other defect models}
In the experiments, apart from the random erasure model depicted in Figure \ref{coef_patterns}, we additionally consider two further random defect models which we introduce as an L-shaped defect model [Figure \ref{incl-models} (left)] and a shifted defect model [Figure \ref{incl-models} (right)]. These models introduce different levels of geometric complexity and, they therefore serve to assess the robustness of the preconditioners with respect to changes in defect shape and spatial localization. Both models still fit into the general framework of \eqref{coeff} modifying the local defect geometry encoded in $B_\varepsilon$.  
\begin{figure}[h]
\includegraphics[width=\textwidth]{ 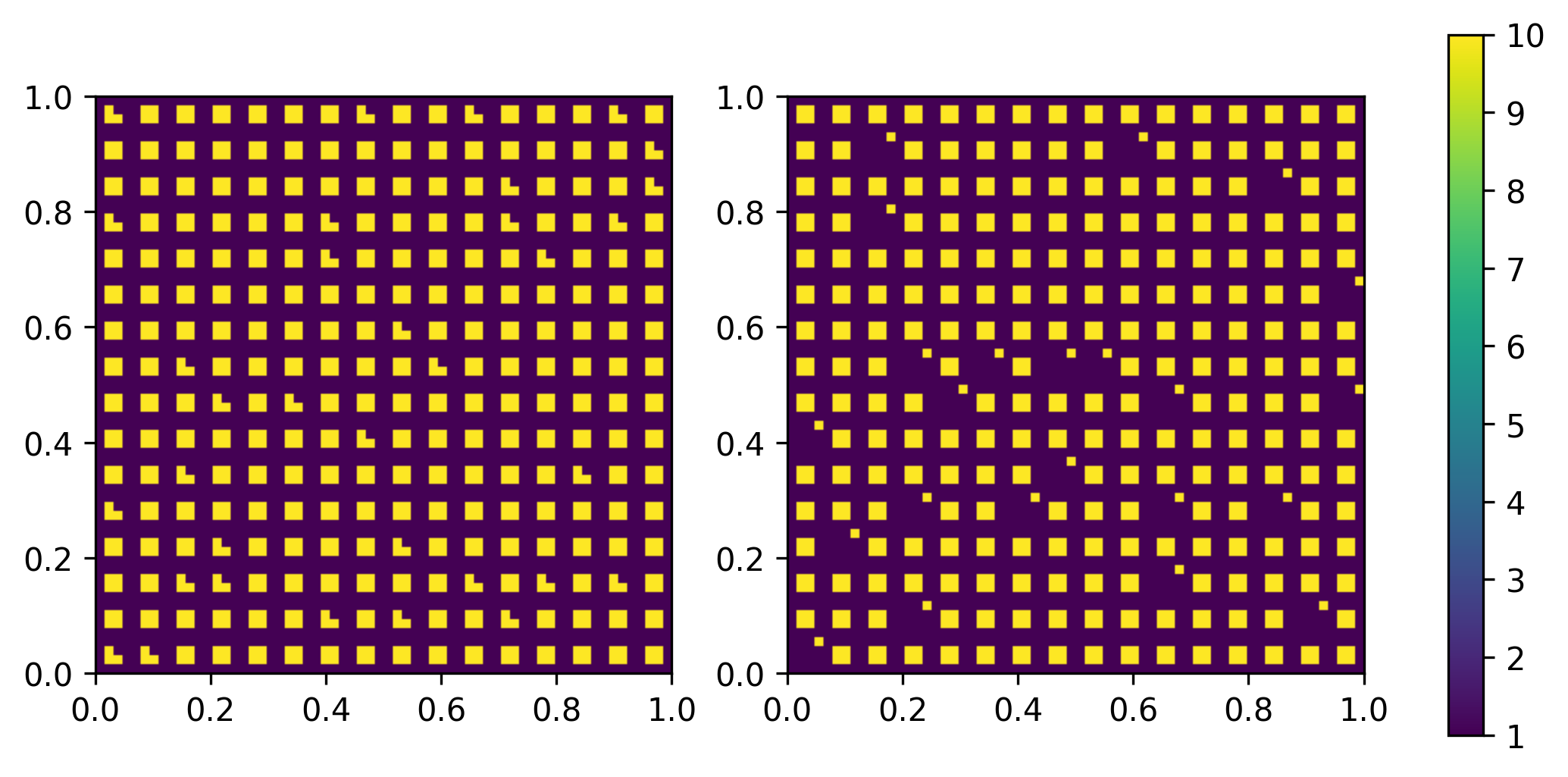}
\caption[0.6\textwidth]{The random defect L-shaped (left) and shifted (right) coefficients with $\alpha=1 $, $\beta=10, p=0.1,\, \varepsilon=2^{-5}$ and $h=2^{-9} $.}
\label{incl-models}
\end{figure}

\subsubsection*{L-shaped random defect model}
The L-shaped model illustrated in Figure~\ref{incl-models} (left) is obtained by choosing $\mathcal{Q}=[0.25,0.75]^2\backslash [0.5, 0.75]^2$ and modifying 
\begin{equation*}
    B_{\mathrm{per}}(y)= \begin{cases} \alpha-\beta, & y\in [0.5,0.75]^2,\\ 0, & \text{otherwise}.
\end{cases}
\end{equation*}
Observe that the defect geometry in this model breaks rotational symmetry but distributes the coefficient perturbation over a larger portion of the local patch compared to the compact square inclusions considered previously.

For all tested defect probabilities and contrasts, the PCG method once again converges for all three preconditioners. An interesting observation in Figure \ref{inclLshape-c} is that the ND-DD preconditioner exhibits smaller iteration counts for the L-shaped defect model than for the square inclusion model we observed in Figure \ref{incl-c}. This behavior can be explained by the spatial distribution of the defect. While the square inclusion introduces a compact and strongly localized perturbation of the coefficient, the L-shaped defect distributes the same contrast over a larger region of the patch. As a result, the deviation of the true local operator from the defect-free background operator is less pronounced, leading to a smaller spectral mismatch and improved convergence of ND-DD.
In contrast, the Direct-DD and OO-DD preconditioners are largely insensitive to this effect, as both methods explicitly account for the local defect structure. Consequently, their iteration counts remain comparable between the inclusion and L-shaped models. 

\begin{figure}[h]
\includegraphics[width=\textwidth]{ 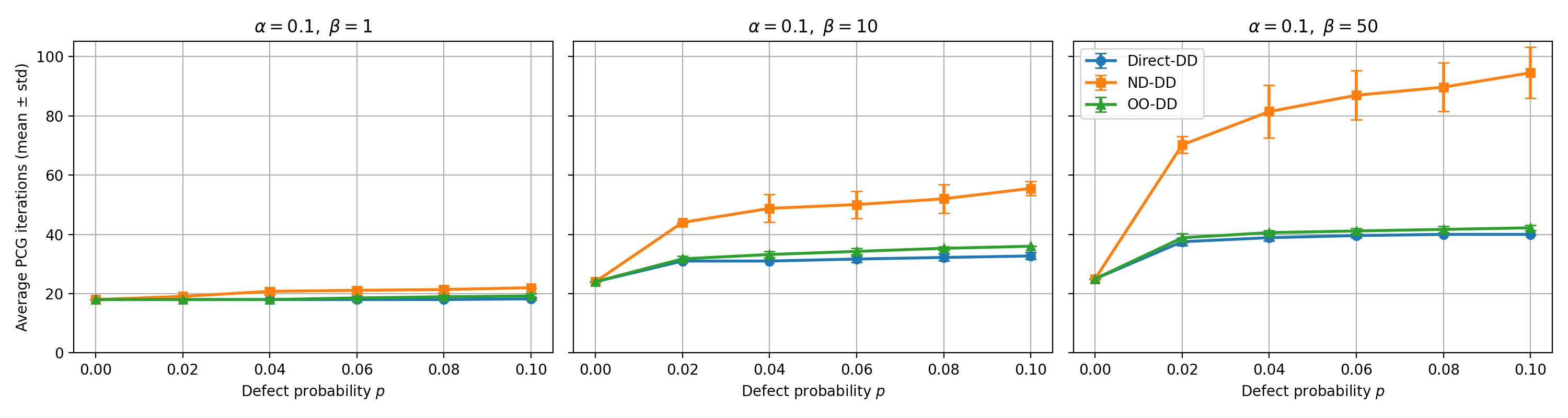}
\caption[0.6\textwidth]{L-shaped random defect model: the average PCG iteration counts against varying defect probabilities for defect contrasts $\beta/\alpha = 10, \, 100$ and $500$ (left to right) respectively.}
\label{inclLshape-c}
\end{figure}

\begin{figure}[H]
\includegraphics[width=\textwidth]{ 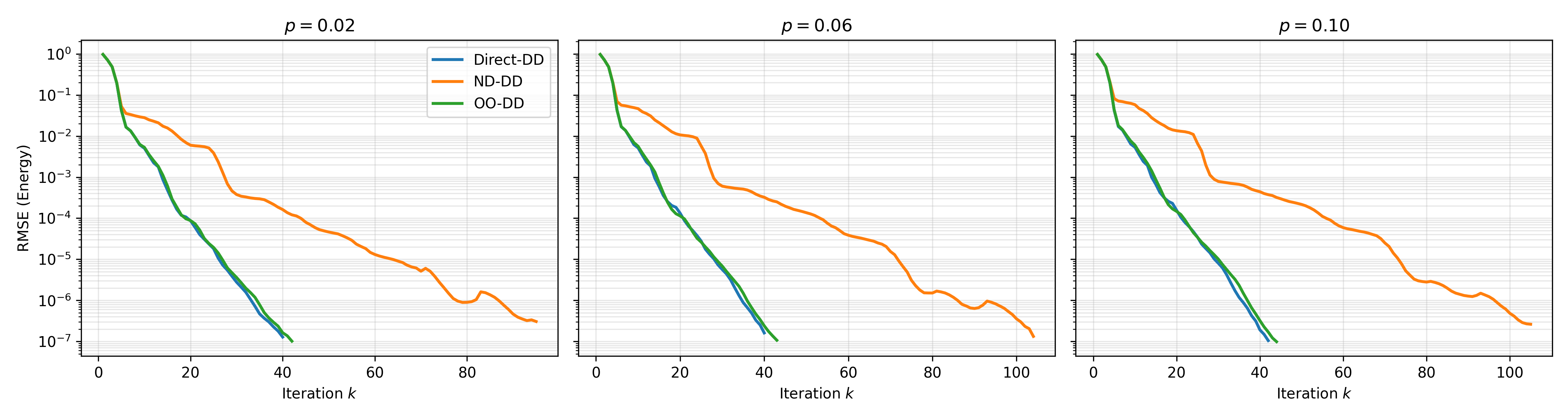}
\caption[0.6\textwidth]{L-shaped random defect model: the RMSE in the energy norm vs iteration count for defect probabilities $p= 0.02, \, 0.06$ and $0.10$ (left to right) respectively at $\beta/\alpha = 500$.}
\label{inclLshape-p}
\end{figure}

\subsubsection*{Shifted random defect model}

We finally consider the random defect shifted model shown in Figure~\ref{incl-models} (right), which represents the most challenging test case among the considered defect configurations. This model is obtained by considering $\mathcal{Q} = [0.75,1]^2$ together with
\begin{equation*}
    B_{\mathrm{per}}(y)= \begin{cases}
\alpha-\beta, & y\in [0.25,0.75]^2,\\
\beta-\alpha, & y\in [0.75,1]^2,\\
0, & \text{otherwise}.
\end{cases}
\end{equation*}
In contrast to the previous models, the defect is no longer centered within the $\varepsilon$-cell, although its fine-scale structure is preserved. This modification breaks the symmetry implicitly exploited by the ND--DD preconditioner.

Figure~\ref{inclshift-c} reports the average number of PCG iterations of the converged samples. For smaller defect contrasts e.g. $\beta/\alpha =10$, all methods converged even with a clearly increased number of iterations. At a contrast $\beta/\alpha =100$, the Direct-DD preconditioner remains robust for all tested defect probabilities, converging for all samples with only a mild increase in iteration counts. Similarly, the proposed OO-DD preconditioner converges for all samples across the full range of defect probabilities, with the average iteration count exhibiting an increase as the defect probability grows.
However, for higher contrast, in particular at $\beta/\alpha = 100$, the ND-DD preconditioner exhibits a severe loss of robustness. Already for small defect probabilities, convergence within the prescribed maximum of $200$ PCG iterations is observed only for a small fraction of the samples. More precisely, at defect probability $p=0.02$, only $10.67\%$ of all samples converged in $200$ iterations. For moderate defect probabilities, no convergence is achieved within this iteration limit. As a consequence, iteration counts for ND-DD are not reported beyond this regime.

The energy-norm RMSE convergence histories in  Figure~\ref{inclshift-p} further confirm these observations. While the OO-DD preconditioner shows a stable and monotone decay of the energy error, the error reduction in ND-DD preconditioner is much slower and the error does not fall below the prescribed tolerance within the maximum of $200$ iterations.

\begin{figure}[h]
\centering
\includegraphics[width=0.8\textwidth, height=5cm]{ 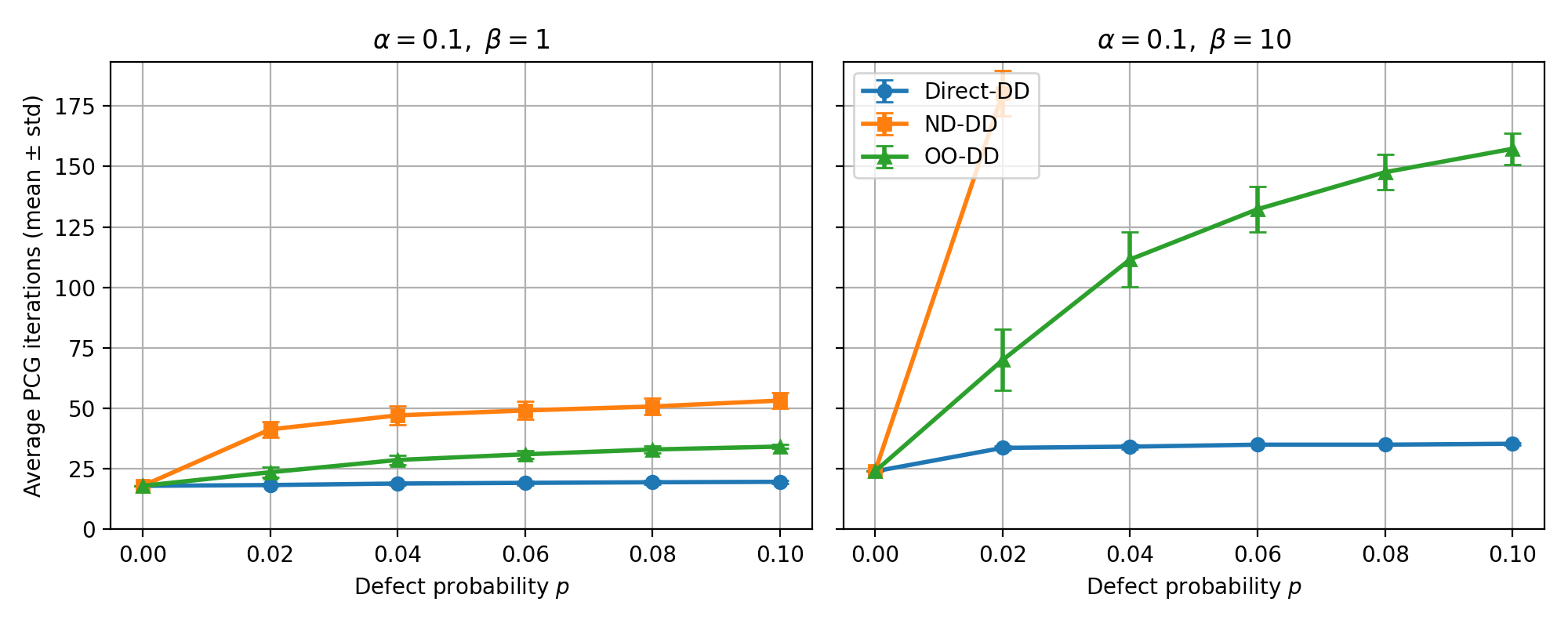}
\caption[0.6\textwidth]{Shifted random defect model: the average PCG iteration counts against varying defect probabilities for defect contrasts $\beta/\alpha = 10$ and $100$ (left to right) respectively.}
\label{inclshift-c}
\end{figure}

\begin{figure}[h]
\includegraphics[width=\textwidth]{ 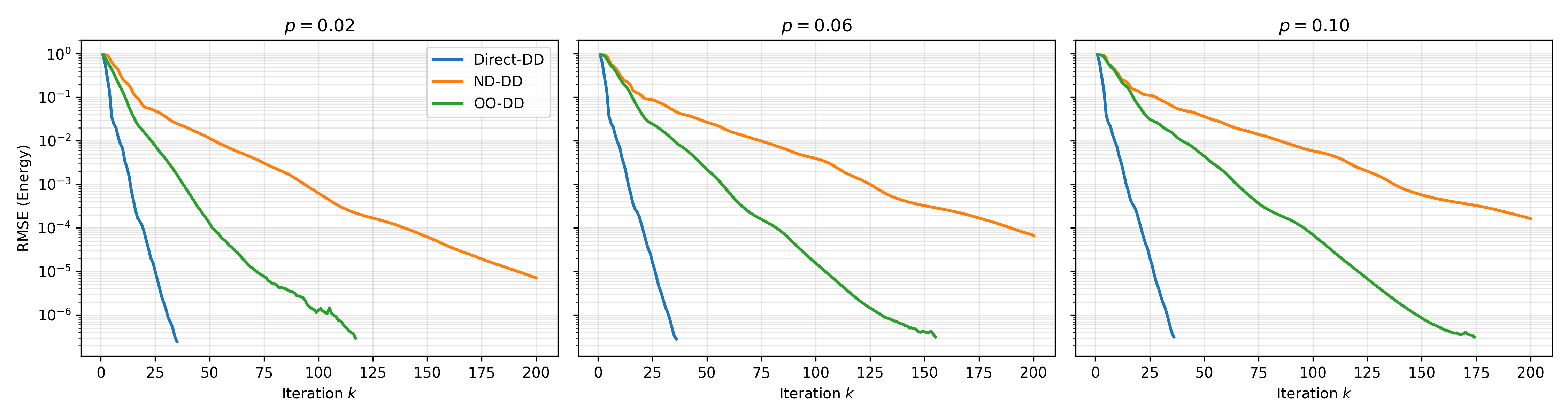}
\caption[0.6\textwidth]{Shifted random defect model: the RMSE in the energy norm vs iteration count for defect probabilities $p= 0.02, \, 0.06$ and $0.10$ (left to right) respectively at $\beta/\alpha = 100$.}
\label{inclshift-p}
\end{figure}

The numerical results across all defect models are fully consistent with the complexity analysis presented in Section~\ref{sec:5}. While the Direct-DD preconditioner yields the smallest iteration counts, its high per-sample cost due to fine-scale patch solves makes it computationally expensive in multi-sample settings. The ND-DD approach, despite its minimal setup cost, lacks robustness once defect symmetry/ centering is violated or considered at higher defect contrast, and therefore fails in more challenging regimes. In contrast, the OO-DD  preconditioner achieves robust convergence for all considered defect configurations with iteration counts close to those of Direct-DD, while incurring only inexpensive online recombination costs. These observations confirm that OO-DD provides the intended balance between robustness and computational efficiency predicted by the theoretical analysis.

\subsection{Experimental validation of the perturbation framework}
\label{sec:6.3}

In Section \ref{sec:4} we established that the offline-online approximation can be interpreted as a perturbation of the exact additive-Schwarz preconditioner. In particular, Theorem \ref{thm:oo-stability} shows that sufficiently small perturbations preserve positive definiteness of the offline-online preconditioner, while Proposition \ref{prop:eta} provides a computable upper bound for the perturbation constant. In this section we validate these theoretical findings numerically. Rather than explicitly evaluating the quantities $E_i$ from Proposition \ref{prop:eta}, which would require solving additional local eigenvalue problems for every patch and coefficient realization, we consider two complementary numerical indicators. Specifically, we first verify the preservation of positive definiteness of the offline-online preconditioner and then examine the magnitude of the offline-online perturbation through a practical operator based measure. 

First, to verify the positive definiteness, we computed the smallest eigenvalue of $\widetilde{B}(A)$. For over $150$ Monte Carlo realizations of Experiment \ref{ex:1} at $\beta/\alpha =10$ for all considered probabilities, the smallest observed eigenvalue remained strictly positive. These exact values can be found in the repository \cite{dilini_kolombage_2026_21769877}. This can similarly be verified easily also for all other experiments as well.

Second, we assess the magnitude of the offline-online perturbation through the relative deviation 
\begin{equation*}
\frac{\| (\overline {\mathcal B}-\mathcal B) r \|_2}{\| \mathcal B r \|_2},
\end{equation*}
for random test vectors $r$. Here, $\overline B$ denotes either $\widetilde {\mathcal B}$ or $\mathcal B(A_0)$ (ND-DD). The reported values are obtained  by computing the RMSE of this relative deviation across all samples. Although this quantity is not an estimator for the perturbation constant $\eta$, it provides complementary empirical evidence of how accurately the offline-online approximation reproduces the Direct-DD preconditioner and allows a direct comparison with the ND-DD preconditioner. This, in turn, helps explain the robustness of the proposed method observed in the previous experiments. Figure \ref{fig:Bdiff-incl} shows the resulting relative deviations for the random inclusion model for varying defect probabilities. For both methods mean relative deviation increase as $p$ increases, reflecting the growing deviation of the coefficient from its approximation as more defects come into play. However, the OO-DD preconditioner $\widetilde {\mathcal B}$ remains consistently much closer to the Direct-DD preconditioner $\mathcal B$ than the ND-DD preconditioner $\mathcal B(A_0)$ across the entire range of the considered defect probabilities. This behavior is consistent with the perturbation framework developed in Section 4 and explains why the convergence of OO-DD remains much closer to that of Direct-DD, whereas ND-DD deteriorates more rapidly.
\begin{figure}[h]
\centering
\includegraphics[width=0.45\textwidth, height=4.6cm]{ 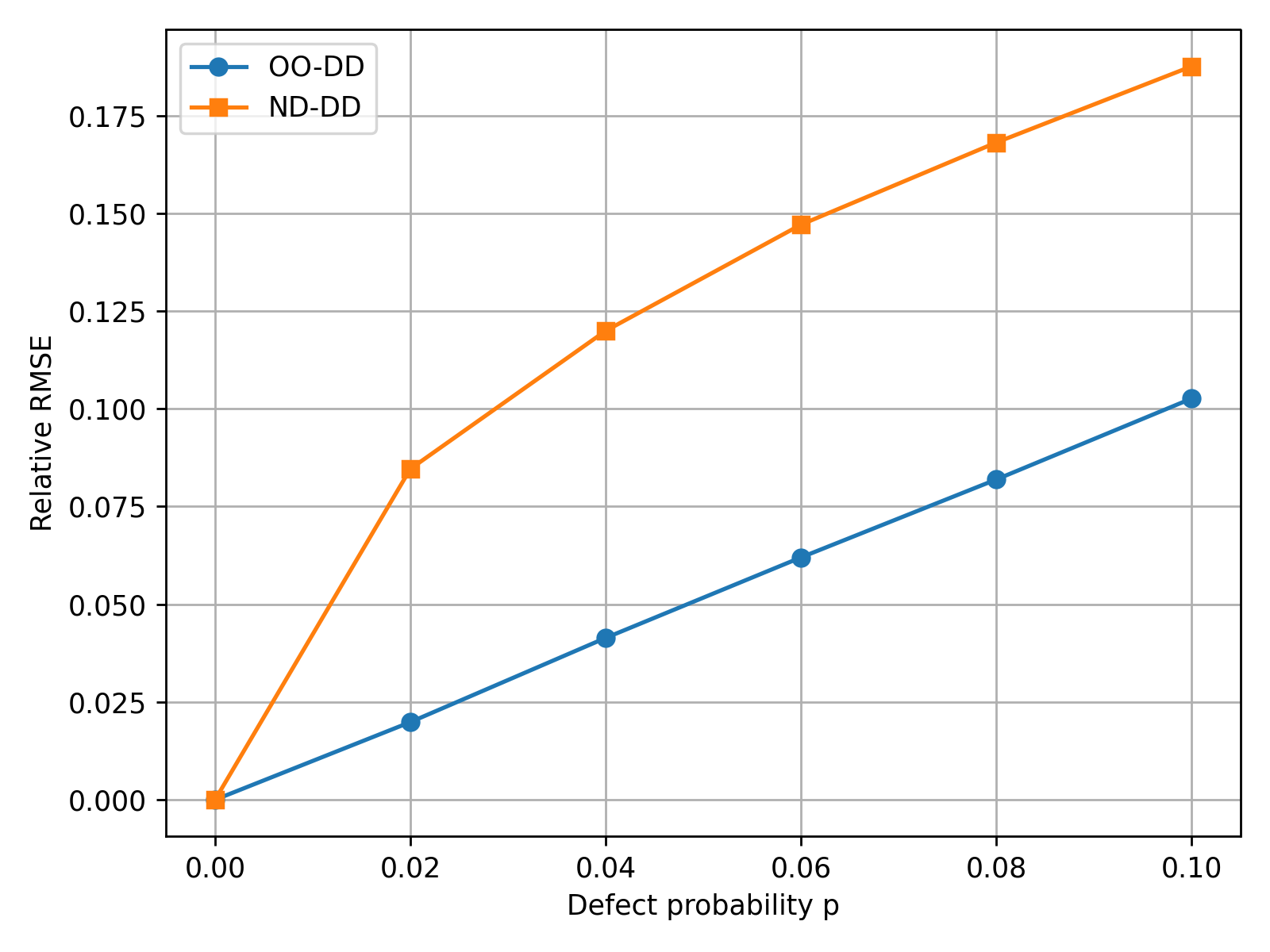}
\caption[0.6\textwidth]{The operator behavior for varying defect probabilities in random erasure model with contrast $\beta/\alpha = 500$.}
\label{fig:Bdiff-incl}
\end{figure}

\section{Conclusion}
\label{sec:7}
In this work, we proposed and analyzed a two-level domain decomposition preconditioner for diffusion problems with localized random defects. The method is based on a subspace decomposition framework in which expensive fine-scale patch solves are computed in an offline stage and reused in the online stage through linear combinations of reference operators corresponding to single-defect configurations.

From a theoretical perspective, we derived perturbation-based bounds that quantify the effect of replacing exact local operators by offline-online approximations. These results indicate that the spectral properties of the preconditioned system degrade only moderately when the defect structure is well captured by the chosen reference space. A detailed run-time complexity analysis further demonstrated that, beyond a small break-even number of samples, the proposed method is expected to be significantly more efficient than direct domain decomposition approaches.

Extensive numerical experiments in two dimensions confirmed these theoretical findings. Across a range of defect probabilities, contrasts, and geometries, the OO-DD preconditioner consistently achieved convergence behavior close to that of the exact Direct-DD method while substantially reducing the per-sample computational cost. In particular, for shifted defect configurations that break defect centering, the ND-DD preconditioner was observed to lose convergence of the PCG solver, whereas the OO-DD method remained fully convergent. Overall, the proposed offline-online strategy offers a robust and efficient preconditioning approach for multi-query diffusion problems with localized defects. Future work includes adaptive selection of reference defect configurations and extensions of the reference space to include multiple and overlapping defect configurations.

\section*{Acknowledgments}
The authors would like to thank the Hausdorff Institute for Mathematics (HIM), Bonn, for its hospitality, where part of this work was completed during the trimester program ``Computational multifidelity, multilevel, and multiscale methods''.

\section*{Funding}
This research is funded by the Deutsche Forschungsgemeinschaft (DFG, German Research Foundation) under project number 496556642. BV additionally acknowledges support from the Deutsche Forschungsgemeinschaft (DFG, German Research Foundation) under Germany's Excellence Strategy – EXC-2047/1 – 390685813. AM acknowledges support from the Swedish Research Council, project number 2023-03258 VR.

\section*{Data availability statement}
The research data/code associated with this article are available in https://github.com/DKolombage/random-perturbations-preconditioning under the reference https://doi.org/10.5281/zenodo.21769877.

\bibliographystyle{abbrv}
\bibliography{references}
\end{document}